\numberwithin{equation}{section}
\numberwithin{figure}{section}
\theoremstyle{plain}
\newtheorem{thm}{Theorem}[section]
\newtheorem{cor}[thm]{Corollary}
\newtheorem{lem}[thm]{Lemma}
\newtheorem{prop}[thm]{Proposition}
\theoremstyle{definition}
\newtheorem{defn}[thm]{Definition}
\theoremstyle{remark}
\newtheorem{rem}[thm]{Remark}
\DeclarePairedDelimiter{\paren}{\lparen}{\rparen}%
\DeclarePairedDelimiter{\pbrace}{\lbrace}{\rbrace}%
\DeclarePairedDelimiter{\abs}{\lvert}{\rvert}%
\DeclarePairedDelimiter{\norm}{\lVert}{\rVert}%
\newcommand{\Aut}{\operatorname{Aut}}
\newcommand{\diag}{\operatorname{diag}}
\newcommand{\im}{\operatorname{im}}
\renewcommand{\Re}{\operatorname{Re}}
\begin{document}

\title[Punctured parabolic cylinders]{Punctured parabolic cylinders in automorphisms of $\mathbb{C}^{2}$}
\author[J. Reppekus]{Josias Reppekus}
\thanks{The author acknowledges the MIUR Excellence Department Project awarded
to the Department of Mathematics, University of Rome Tor Vergata,
CUP E83C18000100006}
\address{Dipartimento di Matematica\\
Universit\`{a} di Roma ``Tor Vergata''\\
Via Della Ricerca Scientifica 1 \\
00133, Roma, Italy}
\email{reppekus@mat.uniroma2.it}

\subjclass[2010]{Primary 37F50; Secondary 32A30, 39B12}
\keywords{Fatou sets; holomorphic dynamics}

\begin{abstract}
We show the existence of automorphisms $F$ of $\mathbb{C}^{2}$ with
a non-recurrent Fatou component $\Omega$ biholomorphic to $\mathbb{C}\times\mathbb{C}^{*}$
that is the basin of attraction to an invariant entire curve on which
$F$ acts as an irrational rotation. We further show that the biholomorphism
$\Omega\to\mathbb{C}\times\mathbb{C}^{*}$ can be chosen such that
it conjugates $F$ to a translation $(z,w)\mapsto(z+1,w)$, making
$\Omega$ a parabolic cylinder as recently defined by L.~Boc Thaler,
F.~Bracci and H.~Peters. $F$ and $\Omega$ are obtained by blowing
up a fixed point of an automorphism of $\mathbb{C}^{2}$ with a Fatou
component of the same biholomorphic type attracted to that fixed point,
established by F.~Bracci, J.~Raissy and B.~Stens{\o}nes. A crucial
step is the application of the density property of a suitable Lie
algebra to show that the automorphism in their work can be chosen
such that it fixes a coordinate axis. We can then remove the proper
transform of that axis from the blow-up to obtain an $F$-stable subset
of the blow-up that is biholomorphic to $\mathbb{C}^{2}$. Thus we
can interpret $F$ as an automorphism of $\mathbb{C}^{2}$.
\end{abstract}

\maketitle
\newpage{}

\section{Introduction}

When studying the behaviour of iterates of a holomorphic endomorphism
$F$ of $\mathbb{C}^{d}$, $d\ge1$, one of the basic objects of interest
is the \emph{Fatou set} of all points in $\mathbb{C}^{d}$ that admit
a neighbourhood on which $\{F^{n}\}_{n\in\mathbb{N}}$ is normal.
The connected components of the Fatou set are called the \emph{Fatou
components} of $F$. They can be thought of as maximal connected subsets
of $\mathbb{C}^{2}$ on which the behaviour of $F$ is roughly the
same. A Fatou component $V$ of $F$ is \emph{invariant}, if $F(V)=V$.
It is \emph{recurrent} if it contains an accumulation point of an
orbit $\{F^{n}(p)\}_{n\in\mathbb{N}}$ for some $p\in V$ and \emph{non-recurrent}
or \emph{transient} otherwise. $V$ is \emph{attracting} to a point
$p$ in the closure $\overline{V}$ of $V$ if all orbits starting
in $V$ converge to $p$.

In one variable, an invariant Fatou component $V$ of an entire function
is either attracting to a point in $\overline{V}$, or all orbits
in $V$ escape to $\infty$ ($V$ is a \emph{Baker domain}), or $V$
is a \emph{rotation domain}, i.e.\ there exists a subsequence $\{F^{n_{k}}\}_{k}$
converging to the identity on $V$. In other words, the images of
limit functions of convergent subsequences of $\{F^{n}\}_{n}$ on
$V$ are unique and have dimension $0$ or full dimension $1$ respectively.

Recurrent Fatou components of polynomial automorphisms of $\mathbb{C}^{2}$
have been classified in \cite{BedfordSmillie.1991.Polynomialdiffeomorphismsof(mathbbC2).II:Stablemanifoldsandrecurrence.},
\cite{FornessSibony.1995.Classificationofrecurrentdomainsforsomeholomorphicmaps.}
and \cite{Ueda.2008.HolomorphicmapsonprojectivespacesandcontinuationsofFatoumaps.}.
\cite{ArosioBeniniFornessPeters.2019.DynamicsoftranscendentalHenonmaps.}
generalises these results to automorphisms $F$ of $\mathbb{C}^{2}$
with constant Jacobian and shows that in this case a recurrent Fatou
component $V$ of $F$ is either the basin of an attracting fixed
point in $V$ (biholomorphic to $\mathbb{C}^{2}$ by \cite[Theorem~2]{PetersVivasWold.2008.AttractingbasinsofvolumepreservingautomorphismsofBbbCk}
and the appendix of \cite{RosayRudin.1988.HolomorphicmapsfrombfCntobfCn}),
or a rotation domain, or a \emph{recurrent} \emph{Siegel} or\emph{
Hermann cylinder}, i.e.\ there is a biholomorphism from $V$ to $A\times\mathbb{C}$,
with $A\subseteq\mathbb{C}$ a domain invariant under rotations that
conjugates $F$ to $(z,w)\mapsto(\lambda z,aw)$ with $|\lambda|=1$
and $|a|<1$ on $V$.

By \cite{LyubichPeters.2014.ClassificationofinvariantFatoucomponentsfordissipativeHenonmaps}
every non-recurrent invariant Fatou component of a polynomial automorphism
with sufficiently small Jacobian is attracting to a parabolic-attracting
fixed point in the boundary. Moreover, by \cite{Ueda.1986.Localstructureofanalytictransformationsoftwocomplexvariables.I},
every attracting non-recurrent invariant Fatou component of a polynomial
automorphism is biholomorphic to $\mathbb{C}^{2}$ and admits coordinates
conjugating it to a translation $(z,w)\mapsto(z+1,w)$. Outside the
polynomial setting, the classification of non-recurrent invariant
Fatou components is far from complete and several new phenomena occur:

The first construction of automorphisms of $\mathbb{C}^{d}$ with
an attracting Fatou component that is not simply connected appeared
in \cite{StensonesVivas2014BasinsofAttractionofAutomorphismsinC3}
for $d\ge3$. In \cite{BracciRaissyStensones.2017.AutomorphismsofmathbbC2withaninvariantnon-recurrentattractingFatoucomponentbiholomorphictomathbbCtimesmathbbC}
the authors construct automorphisms of $\mathbb{C}^{2}$ with an attracting
non-recurrent invariant Fatou component biholomorphic to $\mathbb{C}\times\mathbb{C}^{*}$
(see also \cite{Reppekus.2019.PeriodiccyclesofattractingFatoucomponentsoftypemathbbCtimesmathbbCd-1inautomorphismsofmathbbCd}
for multiple such components). We show in Proposition~\ref{prop:BRSniceCoords}
that on these Fatou components the automorphisms are again conjugated
to a translation $(z,w)\mapsto(z+1,w)$. It is an open question whether
these are the only possible biholomorphic types of non-recurrent attracting
invariant Fatou components of automorphisms of $\mathbb{C}^{2}$ and
if they all admit such a conjugation. It is not even clear that these
are the only homotopy types that can occur.

In \cite{JupiterLilov.2004.InvariantnonrecurrentFatoucomponentsofautomorphismsof(mathbbC2).}
the authors take first steps towards narrowing down the possible invariant
non-recurrent Fatou components of automorphisms of $\mathbb{C}^{2}$.
They split their discussion according to the rank of limit maps of
$\{F^{n}\}_{n}$ on the Fatou component $V$. In case all limit maps
have rank $0$, they show that $V$ is either attracting or the images
of the limit maps form an uncountable set without isolated points
contained in a subvariety of fixed points. The eigenvalues in each
of these points are $\{1,\alpha\}$ where $\alpha$ is a unique non-diophantine
rotation. There are no known examples with more than one rank $0$
limit map.

In the case of rank $1$ limit maps \cite{BocBracciPeters.2019.AutomorphismsofmathbbC2withnon-recurrentSiegelcylinders}
defines and gives examples of parabolic cylinders biholomorphic to
$\mathbb{C}^{2}$ (called non-recurrent Siegel cylinders in an earlier
version of \cite{BocBracciPeters.2019.AutomorphismsofmathbbC2withnon-recurrentSiegelcylinders})
in the following sense:
\begin{defn}
Let $F$ be a self-map of $\mathbb{C}^{2}$. The \emph{$\omega$-limit
set} $\omega_{F}(p)$ of a point $p\in\mathbb{C}^{2}$ or $\omega_{F}(U)$
of an open set $U\subseteq\mathbb{C}^{2}$ under $F$ is the set of
all accumulation points of orbits under $F$ starting in $p$ or $U$
respectively.
\end{defn}

\begin{rem}
A Fatou component $U$ of $F$ is non-recurrent (or transient), if
and only if $\omega_{F}(U)\cap U=\emptyset$.
\end{rem}

\begin{defn}
An invariant non-recurrent Fatou component $V$ of $F$ is called
a \emph{parabolic cylinder}, if
\begin{enumerate}
\item the closure of $\omega_{F}(V)$ contains an isolated fixed point,
\item there is an injective holomorphic map $\Phi:V\to\mathbb{C}^{2}$ conjugating
$F$ to the translation $(z,w)\mapsto(z+1,w)$,
\item all limit maps of $\{F^{n}\}_{n}$ on $\Omega$ have dimension $1$.
\end{enumerate}
\end{defn}

\cite{JupiterLilov.2004.InvariantnonrecurrentFatoucomponentsofautomorphismsof(mathbbC2).}
gives examples of Fatou components with a unique rank $1$ limit map
and with an uncountable family of rank $1$ limit maps with identical
images. The latter are a subclass of the parabolic cylinders examined
in \cite{BocBracciPeters.2019.AutomorphismsofmathbbC2withnon-recurrentSiegelcylinders}.
The authors further show that the images of two limit maps of rank
$1$ are either disjoint or intersect in a relatively open subset.
There are no known examples of rank $1$ limit maps with non-identical
images or limit maps of different rank.

In this paper we show the following:
\begin{thm}
\label{thm:PuncturedSiegelCyl}There exist automorphisms $F$ of $\mathbb{C}^{2}$
with a parabolic cylinder $\Omega$ biholomorphic to $\mathbb{C}\times\mathbb{C}^{*}$
and an invariant entire curve $\mathcal{C}=\mathbb{C}\times\{0\}$
in the boundary of $\Omega$ on which $F$ acts as an irrational rotation
around $(0,0)$ such that
\begin{enumerate}
\item the stable set $W^{s}(\mathcal{C}):=\{p\in\mathbb{C}^{2}\mid\omega_{F}(p)\subseteq\mathcal{C}\}$
of $\mathcal{C}$ is $\Omega\cup\mathcal{C}$, i.e.\ $\Omega$ contains
precisely all orbits approaching $\mathcal{C}$ non-trivially.
\item the $\omega$-limit set $\omega_{F}(\Omega)$ of $\Omega$ is $\mathcal{C}^{*}:=\mathbb{C}^{*}\times\{0\}$
and the limit maps of $\{F^{n}\}_{n}$ on $\Omega$ all have image
$\mathcal{C}^{*}$ and differ precisely by postcomposition with arbitrary
rotations of \emph{$\mathcal{C}^{*}$}.
\end{enumerate}
\end{thm}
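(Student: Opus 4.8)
The plan is to build $F$ as the blow-up of an automorphism $\widetilde F$ of $\mathbb{C}^2$ of the type constructed in \cite{BracciRaissyStensones.2017.AutomorphismsofmathbbC2withaninvariantnon-recurrentattractingFatoucomponentbiholomorphictomathbbCtimesmathbbC}, but with $\widetilde F$ chosen so that one of the coordinate axes is invariant, and then to remove the proper transform of that axis from the blow-up. Concretely, the starting point is an automorphism $\widetilde F$ of $\mathbb{C}^2$ fixing the origin, with $D\widetilde F(0,0) = \operatorname{diag}(\lambda,\mu)$ where $\lambda = e^{2\pi i\theta}$ is an irrational rotation number and $|\mu|<1$, possessing an attracting non-recurrent invariant Fatou component $\widetilde\Omega$ biholomorphic to $\mathbb{C}\times\mathbb{C}^*$ with $\omega_{\widetilde F}(\widetilde\Omega) = \{(0,0)\}$. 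The first task is to arrange that $\widetilde F$ additionally fixes the axis $\{z=0\}$ (or $\{w=0\}$); this is exactly the step the abstract flags as crucial, and I expect it to be the main obstacle. The idea is to run the construction of \cite{BracciRaissyStensones.2017.AutomorphismsofmathbbC2withaninvariantnon-recurrentattractingFatoucomponentbiholomorphictomathbbCtimesmathbbC} inside the group of automorphisms preserving that axis — equivalently, realise the required local model and global approximation steps using a Lie algebra of vector fields tangent to $\{z=0\}$ that still has the density property (or an appropriate relative/volume-density property), so that Andersén--Lempert-type approximation produces a genuine automorphism fixing the axis while retaining the Fatou component. One then checks that the component $\widetilde\Omega$ is disjoint from $\{z=0\}$: since every orbit in $\widetilde\Omega$ converges to $(0,0)$ tangentially to the attracting direction and the axis is a stable manifold-like object, $\widetilde\Omega \cap \{z=0\} = \emptyset$, while $\{z=0\}\setminus\{0\}$ lies outside the Fatou set or at least outside $\widetilde\Omega$.

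Next I would pass to the blow-up $\pi:X\to\mathbb{C}^2$ at the origin. Since $D\widetilde F(0,0)$ is diagonal with distinct eigenvalue moduli, $\widetilde F$ lifts to an automorphism of a neighbourhood of the exceptional divisor $E$, fixing the two points of $E$ corresponding to the coordinate axes and acting on $E\cong\mathbb{P}^1$ with multipliers $\mu/\lambda$ and $\lambda/\mu$ at those points. The proper transform $\widehat A$ of the invariant axis $\{z=0\}$ is then an $\widetilde F$-invariant curve in $X$ meeting $E$ at one point. Set $Y := X\setminus \widehat A$. The point is that $Y$ is biholomorphic to $\mathbb{C}^2$: blowing up a point of $\mathbb{C}^2$ and then deleting the proper transform of a line through that point yields $\mathbb{C}^2$ again (one sees this by an explicit chart — in suitable affine coordinates $Y$ is the complement of a fibre in a line bundle over $\mathbb{C}$, hence $\mathbb{C}^2$; this is a standard affine-modification computation). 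The lift of $\widetilde F$ restricts to an automorphism $F$ of $Y\cong\mathbb{C}^2$. Identifying $E\setminus(E\cap\widehat A)$ with a line $\mathcal{C} = \mathbb{C}\times\{0\}$ in these new coordinates, $F$ acts on $\mathcal{C}$ as multiplication by $\lambda/\mu$ composed with the attracting behaviour — more precisely, after normalising, $F|_{\mathcal{C}}$ is conjugate to an irrational rotation $z\mapsto e^{2\pi i\alpha}z$ around the origin; here $\alpha$ comes from the ratio of the two multipliers and is irrational provided $\theta$ and the analytic data are chosen generically, which we are free to do.

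Finally I would transport the dynamical description of $\widetilde\Omega$ through $\pi$ and the deletion. The Fatou component $\widetilde\Omega \subset \mathbb{C}^2\setminus\{0\}$ lifts biholomorphically (it avoids both $0$ and the deleted axis) to an $F$-invariant domain $\Omega\subset Y$, still biholomorphic to $\mathbb{C}\times\mathbb{C}^*$, and still a Fatou component of $F$ (blow-up and removal of an invariant analytic set are local biholomorphisms away from the modified locus, so normality is preserved). Orbits in $\widetilde\Omega$ converging to $(0,0)$ now converge in $Y$ to points of $E$; tracking the tangential rate shows the accumulation set is exactly $E$ minus the two special points, i.e.\ $\mathcal{C}^* = \mathbb{C}^*\times\{0\}$, giving $\omega_F(\Omega) = \mathcal{C}^*$ and (1) that $W^s(\mathcal{C}) = \Omega\cup\mathcal{C}$, because a point whose orbit accumulates only on $\mathcal{C}$ must, downstairs, have orbit converging to $0$ tangent to the attracting axis, forcing it into $\widetilde\Omega$. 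For (2), the limit maps: $\widetilde F$ on $\widetilde\Omega$ has rank-$1$ limit maps (this is part of Proposition~\ref{prop:BRSniceCoords} and the structure in \cite{BracciRaissyStensones.2017.AutomorphismsofmathbbC2withaninvariantnon-recurrentattractingFatoucomponentbiholomorphictomathbbCtimesmathbbC}), and after blow-up the limit maps of $\{F^n\}$ on $\Omega$ have image the full punctured axis $\mathcal{C}^*$; since $F$ acts on $\mathcal{C}^*$ as an irrational rotation, the closure of $\{F^n|_{\mathcal{C}^*}\}$ is the full rotation group, so any two limit maps differ by postcomposition with a rotation and every rotation is attained — this is the blown-up avatar of the fact that the fibres of the limit foliation on $\widetilde\Omega$ collapse to the single point $0$, whose blow-up is the whole $\mathbb{P}^1$. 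The parabolic-cylinder conclusion (the translation conjugacy $(z,w)\mapsto(z+1,w)$ on $\Omega$) is inherited from Proposition~\ref{prop:BRSniceCoords}, since that conjugacy lives on $\widetilde\Omega$ itself and is unaffected by the modification. The one genuinely delicate point, to repeat, is the first: guaranteeing via the density property that the Bracci--Raissy--Stensønes construction can be carried out within automorphisms fixing a coordinate axis without destroying the Fatou component; everything after the blow-up is bookkeeping in explicit charts.
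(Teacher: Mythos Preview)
Your overall architecture matches the paper: take a Bracci--Raissy--Stens{\o}nes automorphism $\check F$, use the density property (Varolin) to arrange that $\check F$ fixes a coordinate axis, blow up the origin, delete the proper transform of that axis, and read off the dynamics on the resulting $\mathbb{C}^{2}$. You also correctly locate the one non-routine step.

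The genuine gap is your choice of linear part. You take $D\widetilde F(0,0)=\diag(\lambda,\mu)$ with $|\lambda|=1$ and $|\mu|<1$. The BRS construction does \emph{not} have a contracting eigenvalue: in the paper $\check F$ has linear part $\diag(\lambda,\overline{\lambda})$ with $\lambda\in S^{1}$ Brjuno, and the attraction to $0$ comes entirely from the one-resonant quadratic term $-\tfrac{1}{2}zw\cdot(z,w)$. This matters for two reasons. First, the lift of $\widetilde F$ acts on the exceptional divisor, in the affine coordinate $x=z/w$, by $x\mapsto(\lambda/\mu)x$. With $|\mu|<1$ this has modulus $>1$, so the induced map on $\mathcal{C}$ is hyperbolic and cannot be ``normalised'' to an irrational rotation; the theorem's hypothesis that $F|_{\mathcal{C}}$ is an irrational rotation forces $|\mu|=1$, and in the paper one gets exactly $x\mapsto\lambda^{2}x$. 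Second, your description of the limit maps is inverted: downstairs on $\check\Omega$ the only limit map of $\{\check F^{n}\}$ is the constant $(0,0)$ (rank~$0$), since $\check\Omega$ is an attracting basin of the origin. Proposition~\ref{prop:BRSniceCoords} gives only the translation conjugacy, not rank-$1$ limits. The rank-$1$ limit maps appear \emph{only after} the blow-up, because orbits in $\check\Omega$ approach $0$ with $|z_{n}/w_{n}|=|x_{n}|\approx 1$ (Lemma~\ref{lem:CharOmegaNew}), so their lifts accumulate on $\mathcal{C}^{*}$, and the irrational rotation $\lambda^{2}$ then spreads each orbit's accumulation over a full circle $|\tau|^{2}S^{1}\times\{0\}$. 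The paper extracts this from the explicit second coordinate $\tau$ built in Section~\ref{sec:Cylinder-coordinates}, not from any rank-$1$ structure already present downstairs.

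With $\mu=\overline{\lambda}$ in place of $|\mu|<1$, and with the understanding that the rank-$1$ behaviour is created by the blow-up rather than transported through it, your outline becomes essentially the paper's proof.
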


The parabolic cylinder in the above theorem is \emph{punctured} in
that it is biholomorphic to $\mathbb{C}\times\mathbb{C}^{*}$ and
has as its $\omega$-limit set a punctured Siegel curve $\mathcal{C}^{*}$,
i.e.\ an entire curve $\mathcal{C}$ on which $F$ is conjugated
to an irrational rotation minus the unique fixed point of $F$ in
$\mathcal{C}$.

\begin{figure}
$\begin{array}{c}\includegraphics[width=0.5\columnwidth]{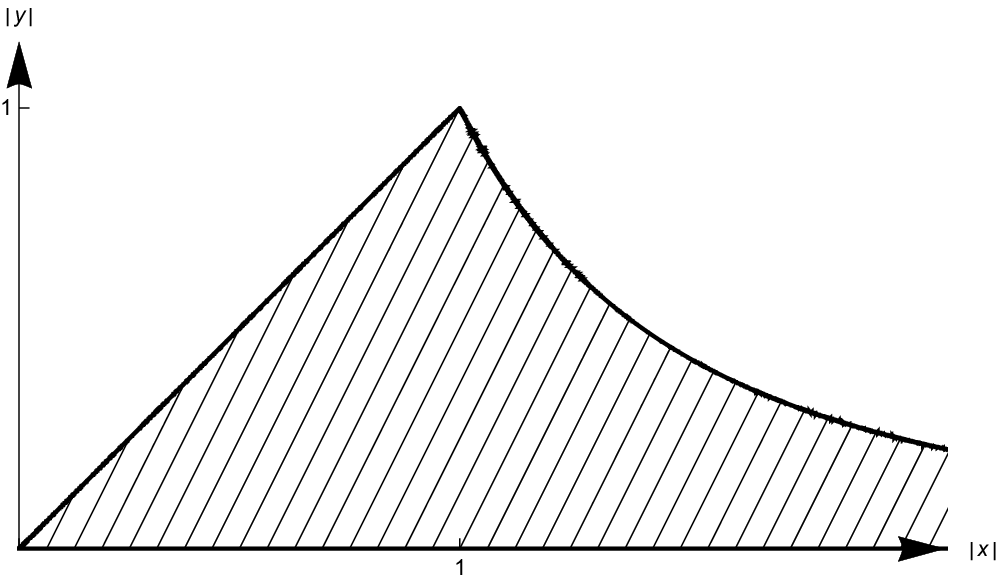}\end{array}\bigtimes\quad{}\begin{array}{c}\includegraphics[bb=0bp 0bp 420bp 293bp,width=0.4\columnwidth]{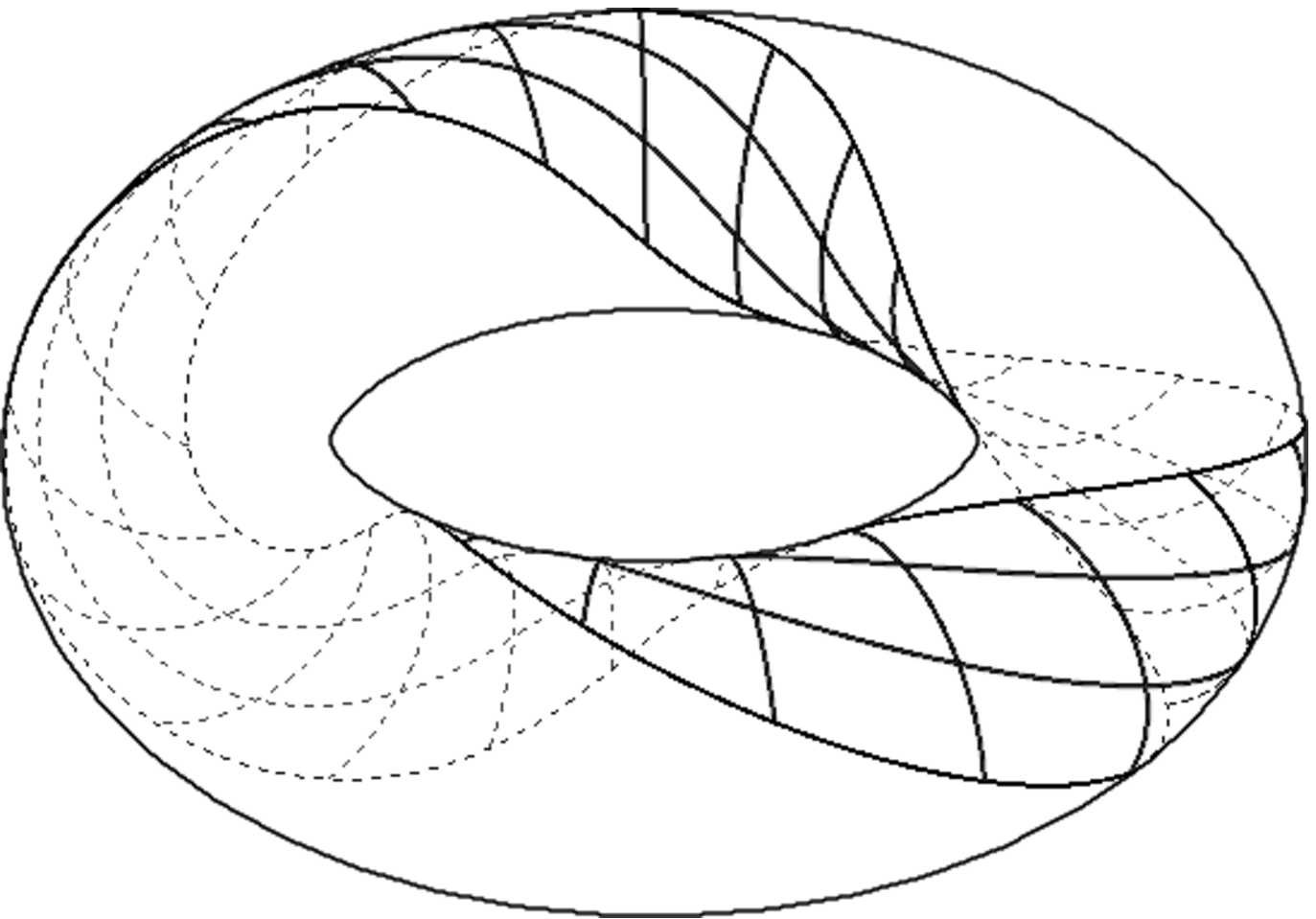}\\ (\arg x,\arg y) \in (S^1)^2 \end{array}$

\caption{\label{fig:Argument-components}Decomposition of $B$ in modulus and
argument components}
\end{figure}
The automorphisms in Theorem~\ref{thm:PuncturedSiegelCyl} have,
near $\mathbb{C}\times\{0\}$, the form
\begin{equation}
F(x,y)=(\lambda^{2}x+R(x,y),\overline{\lambda}y(1-xy^{2}/2)+yO(\norm{(xy,y)}^{l})),\label{eq:AutomAbove}
\end{equation}
where $R(x,y)=xO(\|(xy,y)\|^{l})+O(\|(xy,y)\|^{l})$ and $\lambda\in S^{1}$
is a Brjuno number. They are obtained by lifting to the blow-up at
the origin automorphisms of the form
\begin{equation}
\check{F}(z,w)=\paren{\lambda z,\overline{\lambda}w}\cdot\paren [\Big]{1-\frac{zw}{2}}+wO(\norm{(z,w)}^{l})\label{eq:AutomBelow}
\end{equation}
with $l\in\mathbb{N}_{0}$ sufficiently large, for which \cite{BracciRaissyStensones.2017.AutomorphismsofmathbbC2withaninvariantnon-recurrentattractingFatoucomponentbiholomorphictomathbbCtimesmathbbC}
established the existence of a non-recurrent Fatou component $\check{\Omega}$
attracted to the origin and biholomorphic to $\mathbb{C}\times\mathbb{C}^{*}$.
The parabolic cylinder $\Omega$ is the proper transform of $\check{\Omega}$
and contains an $F$-invariant subset $B$ eventually containing any
orbit in $\Omega$ given by
\[
B=\{(x,y)\in\mathbb{C}^{2}\mid xy^{2}\in S,|x|<\min\{|y|^{2\gamma},|y|^{\gamma-1}\}\},
\]
where $S$ is a small sector with vertex at $0$ around the positive
real axis in $\mathbb{C}$ and $\gamma\in(0,1)$. Figure~\ref{fig:Argument-components}
shows $B$ as a product in polar decomposition (barring some truncation
away from the limit set $\mathbb{C}^{*}\times\{0\}$ depending on
$S$).

\subsection*{Outline}

In Section \ref{sec:The-automorphism}, we construct our family of
automorphisms. We first use results from \cite{Varolin.2001.Thedensitypropertyforcomplexmanifoldsandgeometricstructures.}
and \cite{Varolin.2000.Thedensitypropertyforcomplexmanifoldsandgeometricstructures.II.}
to show the existence of automorphisms $\check{F}$ of the form (\ref{eq:AutomBelow}).
We then blow up at the origin and observe that the lift $F$ of $\check{F}$
leaves invariant the proper transform of the $z$-axis. Removing this
subvariety from the blow-up leaves us with a copy of $\mathbb{C}^{2}$
on which $F$ acts as an automorphism.

In Section~\ref{sec:The-Fatou-component} we use estimates on orbit
behaviour in the Fatou component $\check{\Omega}$ to show that the
proper transform $\Omega$ is still a Fatou component of $F$.

Then we establish coordinates on $\Omega$ conjugating $F$ to $(z,w)\mapsto(z+1,w)$
in Section~\ref{sec:Cylinder-coordinates}, which we use in Section~\ref{sec:Limit-set}
to identify the images of limit maps.

\subsection*{Conventions}

We use the following notations for asymptotic behaviour (as $x\to x_{0}$):
\begin{itemize}
\item $f(x)=O(g(x))$, if $\limsup_{x\to x_{0}}\frac{|f(x)|}{|g(x)|}=C<+\infty$,
\item $f(x)\approx g(x)$, if $f(x)=O(g(x))$ and $g(x)=O(f(x))$,
\item $f(x)\sim g(x)$, if $\lim_{x\to x_{0}}\frac{|f(x)|}{|g(x)|}=1$.
\end{itemize}

\subsection*{Acknowledgements}

The author would like to thank Riccardo Ugolini for the introduction
to D.~Varolin's work, Filippo Bracci for continual advice, and the two
referees for  suggesting a stronger formulation of the main result and
useful comments improving the presentation of the paper.

\section{\label{sec:The-automorphism}The family of automorphisms}

In this section we show that the automorphisms with non-recurrent
Fatou components biholomorphic to $\mathbb{C}\times\mathbb{C}^{*}$
and attracted to the origin constructed in \cite{BracciRaissyStensones.2017.AutomorphismsofmathbbC2withaninvariantnon-recurrentattractingFatoucomponentbiholomorphictomathbbCtimesmathbbC}
can be chosen such that their lift to the blow-up at the origin can
be restricted to an automorphism of a subset biholomorphic to $\mathbb{C}^{2}$.

We first recall the framework of \cite{BracciRaissyStensones.2017.AutomorphismsofmathbbC2withaninvariantnon-recurrentattractingFatoucomponentbiholomorphictomathbbCtimesmathbbC}:
Let $F_{{\rm N}}$ be a germ of biholomorphisms of $\mathbb{C}^{2}$
at the origin given by
\[
F_{{\rm N}}(z,w):=\paren{\lambda z,\overline{\lambda}w}\cdot\paren [\Big]{1-\frac{zw}{2}},
\]
where $\lambda\in S^{1}$ is Brjuno, i.e.
\begin{equation}
-\sum_{\nu=1}^{\infty}2^{-\nu}\log\omega(2^{\nu})<\infty,\label{eq:Brjuno}
\end{equation}
where $\omega(m):=\min\{\abs{\lambda^{k}-\lambda}\mid2\le k\le m\}$
for $m\ge2$.
\begin{defn}
For $r>0$, $\theta\in(0,\pi/2)$, and $\beta\in(0,1/2)$ let
\begin{align*}
W(\beta) & :=\pbrace{(z,w)\in\mathbb{C}^{2}\mid|z|<|zw|^{\beta},|w|<|zw|^{\beta}},\\
S(r,\theta) & :=\{u\in\mathbb{C}\mid|\arg(u)|<\theta,|u-r|<r\},
\end{align*}
and
\[
\check{B}(r,\theta,\beta):=\pbrace{(z,w)\in W(\beta)\mid zw\in S(r,\theta)}.
\]
\end{defn}

The main result in \cite{BracciRaissyStensones.2017.AutomorphismsofmathbbC2withaninvariantnon-recurrentattractingFatoucomponentbiholomorphictomathbbCtimesmathbbC}
(globalising a local result in \cite{BracciZaitsev.2013.Dynamicsofone-resonantbiholomorphisms.})
is:
\begin{thm}
\label{thm:BRS}Let $l\in\mathbb{N}_{\ge4}$, $\theta_{0}\in(0,\pi/2)$,
$\beta_{0}\in(0,1/2)$ such that $\beta_{0}(l+1)\ge4$. Then there
exist automorphisms $\check{F}$ of $\mathbb{C}^{2}$ such that
\begin{equation}
\check{F}(z,w)=F_{{\rm N}}(z,w)+O(\|(z,w)\|^{l})\label{eq:BZgerms}
\end{equation}
near the origin and every automorphism of the form (\ref{eq:BZgerms})
has an non-recurrent invariant Fatou component $\check{\Omega}$ attracted
to $(0,0)$ and biholomorphic to $\mathbb{C}\times\mathbb{C}^{*}$,
that contains a local (uniform) basin of attraction $\check{B}:=\check{B}(r_{0},\beta_{0},\theta_{0})$
for some $r_{0}>0$, that eventually contains any orbit in $\check{\Omega}$,
i.e.\ $\check{F}(\check{B})\subseteq\check{B}$, $\lim_{n\to\infty}\check{F}^{n}\equiv(0,0)$
uniformly in $\check{B}$, and $\check{\Omega}=\bigcup_{n\in\mathbb{N}}\check{F}^{-n}(\check{B})$.
\end{thm}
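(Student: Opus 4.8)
This is the main result of Bracci--Raissy--Stens{\o}nes \cite{BracciRaissyStensones.2017.AutomorphismsofmathbbC2withaninvariantnon-recurrentattractingFatoucomponentbiholomorphictomathbbCtimesmathbbC}, globalising the local analysis of Bracci--Zaitsev \cite{BracciZaitsev.2013.Dynamicsofone-resonantbiholomorphisms.}, and the proof I would give follows theirs. The plan is to split it into an \emph{existence} step and a \emph{dynamics} step. For existence: $F_{\mathrm{N}}$ is a biholomorphic germ at $0$ with diagonal linear part $\diag(\lambda,\overline{\lambda})$, so by Anders\'en--Lempert theory, in jet-interpolation form (the density property of $\mathbb{C}^{2}$), it agrees to order $l-1$ at the origin with a global automorphism $\check F$, producing the maps in \eqref{eq:BZgerms}. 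Every remaining assertion concerns an arbitrary $\check F$ of the form \eqref{eq:BZgerms}, and the organising principle is that the \emph{resonant monomial} $u:=zw$ governs the dynamics. Writing $(z_{n},w_{n}):=\check F^{n}(z,w)$ and $u_{n}:=z_{n}w_{n}$, from $z_{n+1}=\lambda z_{n}(1-u_{n}/2)+O(\norm{(z_{n},w_{n})}^{l})$ and the symmetric formula for $w_{n+1}$ one obtains
\[
u_{n+1}=u_{n}\paren[\big]{1-\tfrac{u_{n}}{2}}^{2}+O\paren[\big]{\norm{(z_{n},w_{n})}^{l+1}}=u_{n}-u_{n}^{2}+O(u_{n}^{3})+O\paren[\big]{\norm{(z_{n},w_{n})}^{l+1}}.
\]
On $W(\beta_{0})$ one has $\norm{(z,w)}<\sqrt{2}\,\abs{zw}^{\beta_{0}}$, so the perturbation is $O(\abs{u_{n}}^{\beta_{0}(l+1)})=O(\abs{u_{n}}^{4})$, negligible against $u_{n}^{2}$; hence the model $u\mapsto u(1-u/2)^{2}$ has $S(r_{0},\theta_{0})$ --- a disc internally tangent to the imaginary axis at $0$, cut down by a thin sector about $\mathbb{R}_{>0}$ --- as a genuine attracting petal once $r_{0}$ is small and $\theta_{0}$ close to $\pi/2$, and the perturbation does not destroy it. Thus $u_{n}\in S(r_{0},\theta_{0})$ for all $n$, $u_{n}\to0$, and $u_{n}\approx 1/n$.

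Next I would check $\check F(\check B)\subseteq\check B$: this amounts to comparing the contraction factor $\abs{1-u_{n}/2}=1-\Re u_{n}/2+O(\abs{u_{n}}^{2})$ felt by $z_{n}$ and $w_{n}$ with the slower shrinking $1-\beta_{0}\Re u_{n}+O(\abs{u_{n}}^{2})$ of the bounds $\abs{z},\abs{w}<\abs{zw}^{\beta_{0}}$ defining $W(\beta_{0})$, which works precisely because $\beta_{0}<1/2$; the $O(\norm{(z,w)}^{l})$-perturbation is swallowed because $\beta_{0}(l+1)\ge4$. Combined with $u_{n}\approx1/n$ this yields $\abs{z_{n}}\approx\abs{w_{n}}\approx n^{-1/2}$, so $\check F^{n}\to(0,0)$ \emph{uniformly} on $\check B$; in particular $\check B$ lies in the Fatou set and is a uniform local basin of $(0,0)$. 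Put $\check\Omega:=\bigcup_{n}\check F^{-n}(\check B)$: the inclusion $\check F(\check B)\subseteq\check B$ makes this an increasing union of sets each homeomorphic, under the automorphism $\check F^{-n}$, to $\check B$ (a bundle of annuli over the $zw$-petal, in particular connected), hence open and connected; normality propagates backwards, so $\check\Omega$ lies in the Fatou set with $\check F^{n}\to(0,0)$ locally uniformly on it, $\omega_{\check F}(\check\Omega)=\{(0,0)\}$, and $(0,0)\notin\check\Omega$, so $\check\Omega$ is non-recurrent. That $\check\Omega$ is a whole Fatou component of $\check F$: if $V\supseteq\check\Omega$ is the component containing it, every limit map of $\{\check F^{n}\}$ on $V$ vanishes on the open set $\check\Omega$, hence on all of $V$ by the identity principle, so every orbit in $V$ converges to $(0,0)$; by the uniqueness of the local basin of $(0,0)$ from \cite{BracciZaitsev.2013.Dynamicsofone-resonantbiholomorphisms.} such an orbit eventually enters $\check B$, whence $V=\bigcup_{n}\check F^{-n}(\check B)=\check\Omega$.

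It remains to identify $\check\Omega$ biholomorphically. On $\check B$ one builds a Fatou-type coordinate system $\Phi_{0}=(Z,W)$ semi-conjugating $\check F$ to $T(Z,W):=(Z+1,\lambda W)$: here $Z$ is an incoming Fatou coordinate for the parabolic $u$-dynamics, so $Z\circ\check F=Z+1$ and $Z$ maps the petal onto a truncated sector $H\subseteq\mathbb{C}$ about $\mathbb{R}_{>0}$ with $\bigcup_{n\ge0}(H-n)=\mathbb{C}$, while $W$ is a nowhere-zero transverse coordinate with $W\circ\check F=\lambda W$ --- to leading order $W=z\,g(zw)$ for a nowhere-zero $g$ on the petal solving $g\paren[\big]{u(1-u/2)^{2}}=g(u)/(1-u/2)$, but corrected for the higher-order terms of $\check F$ by a small-divisor argument, which is the place where the Brjuno condition on $\lambda$ is used. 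Then $\Phi_{0}$ is injective on $\check B$ ($zw$, then $z$, then $w$ are recovered in turn), its image $\check B_{0}:=\Phi_{0}(\check B)\subseteq H\times\mathbb{C}^{*}$ is forward invariant under $T$, and since $\beta_{0}<1/2$ the $W$-fibres of $\check B_{0}$ over points of large real part are annuli whose inner radius tends to $0$ and outer radius to $\infty$, so $\bigcup_{n\ge0}T^{-n}(\check B_{0})=\mathbb{C}\times\mathbb{C}^{*}$. Extending $\Phi_{0}$ to $\check\Omega$ by $\Phi:=T^{n}\circ\Phi_{0}\circ\check F^{n}$ on $\check F^{-n}(\check B)$ --- well defined by the semi-conjugacy --- gives a biholomorphism $\Phi:\check\Omega\to\mathbb{C}\times\mathbb{C}^{*}$ conjugating $\check F$ to $T$; in particular $\check\Omega\cong\mathbb{C}\times\mathbb{C}^{*}$. (The residual rotation in $T$ can be removed afterwards by $(Z,W)\mapsto(Z,W\lambda^{-Z})$, as in Proposition~\ref{prop:BRSniceCoords}.)

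I expect the dynamics step to be the real obstacle: making the parabolic petal estimates \emph{uniform} over $\check B$ against the $O(\norm{(z,w)}^{l})$-perturbation --- this is exactly where the calibration $\beta_{0}<1/2$, $\beta_{0}(l+1)\ge4$ is forced and cannot be relaxed --- and correcting the transverse coordinate $W$ for the higher-order terms of $\check F$ via small divisors, where the Brjuno hypothesis is indispensable; the maximality of $\check\Omega$ among Fatou components rests in turn on the uniqueness of the local basin of $(0,0)$. For these last points I would lean on the Bracci--Zaitsev local theory \cite{BracciZaitsev.2013.Dynamicsofone-resonantbiholomorphisms.}.
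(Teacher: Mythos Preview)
The paper does not prove Theorem~\ref{thm:BRS}: it is stated as the main result of \cite{BracciRaissyStensones.2017.AutomorphismsofmathbbC2withaninvariantnon-recurrentattractingFatoucomponentbiholomorphictomathbbCtimesmathbbC} (globalising \cite{BracciZaitsev.2013.Dynamicsofone-resonantbiholomorphisms.}) and simply cited, with no argument supplied. You correctly identify this and go further, giving a coherent outline of the BRS strategy --- reduction to the parabolic dynamics of the resonant monomial $u=zw$, the calibration $\beta_{0}(l+1)\ge4$ that makes the $O(\norm{(z,w)}^{l})$-perturbation negligible on $W(\beta_{0})$, forward invariance of $\check B$ via $\beta_{0}<1/2$, construction of Fatou-type coordinates, and globalisation by pulling back along $\check F$. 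This is more than the paper itself offers and is faithful to the cited sources.

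One minor slip: in your extension formula you need $\Phi:=T^{-n}\circ\Phi_{0}\circ\check F^{n}$ on $\check F^{-n}(\check B)$, not $T^{n}$; the sign follows from the semi-conjugacy $\Phi_{0}\circ\check F=T\circ\Phi_{0}$. Also, the step ``every orbit converging to $(0,0)$ eventually enters $\check B$'' that you invoke for maximality of $\check\Omega$ is, in the present paper, recorded separately as Proposition~\ref{prop:StableOrbitsBelow} and attributed to \cite{Reppekus.2019.PeriodiccyclesofattractingFatoucomponentsoftypemathbbCtimesmathbbCd-1inautomorphismsofmathbbCd} rather than to \cite{BracciZaitsev.2013.Dynamicsofone-resonantbiholomorphisms.}; this is only a matter of attribution, not of substance.
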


Next we show the above class of automorphisms contains elements fixing
an axis. D.~Varolin's work on the density property shows in particular:
\begin{thm}
\label{thm:JetInterpFixingAxis}For every invertible germ of automorphisms
$G_{0}$ of $\mathbb{C}^{2}$ at the origin pointwise fixing $\{w=0\}$
and every $l\in\mathbb{N}$, there exists an automorphism $G\in\Aut(\mathbb{C}^{2})$
such that
\begin{equation}
G(z,w)=G_{0}(z,w)+wO\paren{{\norm{z,w}}^{l}}.\label{eq:JetInterpFixAx}
\end{equation}
\end{thm}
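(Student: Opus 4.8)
The plan is to realize $G$ as the time-one map of a complete vector field (or a finite composition of such) on $\mathbb{C}^2$ that, up to the prescribed order along $\{w=0\}$, generates the germ $G_0$. Varolin's density property for $\mathbb{C}^2$ says that the Lie algebra generated by complete (polynomial) vector fields is dense in the Lie algebra of all holomorphic vector fields; the Andersén--Lempert theory then lets one approximate, uniformly on compacts, the flow of an arbitrary holomorphic vector field by compositions of flows of complete ones, hence by automorphisms in $\overline{\Aut(\mathbb{C}^2)}$. What we need here is the \emph{jet-interpolation} refinement of this principle: not merely $C^0$-approximation on a ball, but exact agreement of a high-order jet at a point (indeed along the whole axis $\{w=0\}$). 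Varolin--Forstneri\v{c}-type results provide exactly this — an automorphism can be prescribed to agree with a given local biholomorphism to arbitrary finite order at a point, and more generally along a submanifold, provided the local map is suitably compatible with the ambient structure; here the only compatibility needed is that $G_0$ fixes $\{w=0\}$ pointwise.

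The key steps, in order, would be: (i) Write $G_0 = \exp(X_0)$ is too much to hope for directly, so instead interpolate at the level of jets. Since $G_0$ fixes $\{w=0\}$ pointwise, in coordinates $G_0(z,w) = (z + w\,a(z,w),\; w\,b(z,w))$ with $b(z,0)\neq 0$; thus $G_0 - \mathrm{id}$ is divisible by $w$, which is what makes the error term in \eqref{eq:JetInterpFixAx} of the shape $w\,O(\norm{z,w}^l)$ rather than merely $O(\norm{z,w}^l)$. (ii) Invoke the jet-interpolation form of the Andersén--Lempert theorem (Varolin, \cite{Varolin.2001.Thedensitypropertyforcomplexmanifoldsandgeometricstructures.}, \cite{Varolin.2000.Thedensitypropertyforcomplexmanifoldsandgeometricstructures.II.}): because $\mathbb{C}^2$ has the density property and $G_0$ is a biholomorphic germ defined near the origin, there is a global $G\in\Aut(\mathbb{C}^2)$ whose $l$-jet at the origin — in fact along $\{w=0\}$ — equals that of $G_0$. (iii) Upgrade pointwise jet-matching at $0$ to matching along the axis: one connects $G_0$ to the identity through germs fixing $\{w=0\}$, runs the Andersén--Lempert scheme along this path with the submanifold $\{w=0\}$ held fixed at each stage, and observes that the interpolation can be done simultaneously at every point of a compact piece of the axis — equivalently, directly demand that $G$ and $G_0$ agree to order $l$ along $\{w=0\}$, which is the submanifold-jet version of Varolin's result. (iv) Check that the resulting error is genuinely of the form $wO(\norm{z,w}^l)$: since both $G$ and $G_0$ restrict to the identity on $\{w=0\}$ and their $l$-jets agree there, the difference $G - G_0$ vanishes on $\{w=0\}$ and vanishes to order $l+1$ transversally, hence factors as $w\cdot O(\norm{z,w}^l)$.

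The main obstacle is step (iii): arranging that the global automorphism matches the local data not just at the point $0$ but to high order \emph{along the entire coordinate axis} $\{w=0\}$, which is noncompact. The cleanest route is to note that in all intended applications (and in Theorem~\ref{thm:BRS}) only a neighbourhood of the origin is relevant, so it suffices to interpolate along a compact arc of the axis, or even only at $0$ — and then \eqref{eq:JetInterpFixAx} holds as an asymptotic statement near the origin, which is all that the display asserts. Thus the honest difficulty reduces to citing the correct jet-interpolation theorem for manifolds with the density property and verifying the divisibility bookkeeping in step (iv); the former is available in Varolin's papers, and the latter is a routine Taylor-expansion argument. One should also keep track that the construction yields $G\in\Aut(\mathbb{C}^2)$ genuinely (the Andersén--Lempert limit is an automorphism, not merely an injective self-map, because the approximating compositions are automorphisms and the convergence is in the appropriate topology on a Runge-exhausted $\mathbb{C}^2$), so that composing with a representative of $G_0^{-1}\circ(\text{leading part of }F_{\rm N})$ later keeps us inside $\Aut(\mathbb{C}^2)$.
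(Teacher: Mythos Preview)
Your proposal circles the right references but misses the specific mechanism that makes the argument go through, and the fallback you offer in step~(iii) is actually wrong. Interpolating the $l$-jet of $G_0$ only at the origin via the ordinary density property of $\mathbb{C}^2$ gives $G-G_0=O(\norm{(z,w)}^{l+1})$, which is \emph{not} the same as $wO(\norm{(z,w)}^{l})$: the term $z^{l+1}$ in the first component is of the first type but not the second. Your step~(iv) then assumes that the global $G$ restricts to the identity on $\{w=0\}$, but nothing in single-point jet interpolation forces this; that assumption is precisely the content of the theorem. Matching along a compact arc of the axis does not help either, since the asymptotic statement near the origin still requires $G|_{\{w=0\}}=\mathrm{id}$ on a full neighbourhood of $0$ in the axis, and you give no argument for that.

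The paper avoids this entirely by working from the outset with the \emph{restricted} Lie algebra $\mathfrak{g}$ of holomorphic vector fields on $\mathbb{C}^2$ that vanish on $\mathbb{C}\times\{0\}$. Varolin's Theorem~5.1 in \cite{Varolin.2001.Thedensitypropertyforcomplexmanifoldsandgeometricstructures.} shows that $\mathfrak{g}$ itself has the density property, and then Theorem~1 together with Example~1 of \cite{Varolin.2000.Thedensitypropertyforcomplexmanifoldsandgeometricstructures.II.} yield jet interpolation by compositions of time-one maps of \emph{complete} fields in $\mathfrak{g}$. Since every such field vanishes on $\{w=0\}$, every flow---and hence the composite $G$---fixes $\{w=0\}$ pointwise and globally. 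The factor $w$ in the error then comes for free: both $G$ and $G_0$ fix the axis, so $G-G_0$ is divisible by $w$, and the jet agreement at $0$ gives the remaining $O(\norm{(z,w)}^{l})$. In short, the key input is not a ``submanifold-jet version'' of the usual theorem, but the density property of the correct sub-Lie-algebra; once you cite that, the proof is three lines.
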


\begin{proof}
By \cite[Theorem~5.1]{Varolin.2001.Thedensitypropertyforcomplexmanifoldsandgeometricstructures.},
the Lie algebra $\mathfrak{g}$ of holomorphic vector fields on $\mathbb{C}^{2}$
that vanish on $\mathbb{C}\times\{0\}$ has the \emph{density property},
i.e.\ the complete vector fields are dense in $\mathfrak{g}$. \cite[Theorem~1]{Varolin.2000.Thedensitypropertyforcomplexmanifoldsandgeometricstructures.II.}
states that for such a Lie algebra, if a germ can be interpolated
up to some order $l\in\mathbb{N}$ (i.e. matched up to order $l$)
by compositions of flows of vector fields in $\mathfrak{g}$, the
same can be done using only flows of \emph{complete} vector fields
in $\mathfrak{g}$. Flows of complete vector fields in $\mathfrak{g}$
are automorphisms of $\mathbb{C}^{2}$ fixing $\{w=0\}$. By \cite[Example~1]{Varolin.2000.Thedensitypropertyforcomplexmanifoldsandgeometricstructures.II.}
, the germs that can be interpolated in this way (to arbitrary order
$l\in\mathbb{N}$) are precisely the ones fixing $\{w=0\}$ pointwise.
\end{proof}
Let $\Lambda=\diag(\lambda,\overline{\lambda})$. Applying Theorem~\ref{thm:JetInterpFixingAxis}
to $G_{0}=\Lambda^{-1}F_{{\rm N}}$, we obtain an automorphism $\check{F}=\Lambda G\in\Aut(\mathbb{C}^{2})$
fixing $L:=\{w=0\}$ as a set and interpolating $F_{{\rm N}}$ up
to order $l$, i.e.
\begin{equation}
\check{F}(z,w)=F_{{\rm N}}(z,w)+wO(\|(z,w)\|^{l}).\label{eq:GermFixingAxis}
\end{equation}
In particular, for $(z,0)\in L$, we have $\check{F}(z,0)=(\lambda z,0)\in L.$

Finally consider the Blow-up $\Pi:\widehat{\mathbb{C}^{2}}\to\mathbb{C}^{2}$
of $\mathbb{C}^{2}$ at the origin. Then the lift $F$ of $\check{F}$
to $\widehat{\mathbb{C}^{2}}$ leaves invariant the proper transform
$\hat{L}$ of $L$ and hence its complement $\widehat{\mathbb{C}^{2}}\backslash\hat{L}$
which is isomorphic to $\mathbb{C}^{2}$ via the coordinates $(x,y)=(z/w,w)$
defined (after extending through the exceptional divisor $E:=\Pi^{-1}((0,0))$)
on all of $\widehat{\mathbb{C}^{2}}\backslash\hat{L}$. So $F$ induces
an automorphism of $\mathbb{C}^{2}$ in these coordinates. The exceptional
divisor $E$ restricted to this $\mathbb{C}^{2}$ is $E'=\mathbb{C}\times\{0\}$.
For $(x,y)\in\mathbb{C}^{2}$, let $(x_{n},y_{n}):=F^{n}(x,y)$. Then
we have

\[
x_{1}=\frac{\lambda^{2}x(1-xy^{2}/2)+O(\|(xy,y)\|^{l})}{1-xy^{2}/2+O(\|(xy,y)\|^{l})}=\lambda^{2}x+R(x,y),
\]
where $R(x,y)=xO(\|(xy,y)\|^{l})+O(\|(xy,y)\|^{l})$ near $E'$ and
hence
\[
F(x,y)=(\lambda^{2}x+R(x,y),\overline{\lambda}y(1-xy^{2}/2)+yO(\norm{(xy,y)}^{l})).
\]
In particular $\mathcal{C}=E'\cong\mathbb{C}$ is a \emph{Siegel curve}
for $F$, i.e. $F(x,0)=(\lambda^{2}x,0)$ for all $(x,0)\in E'$.
The local basin $\check{B}$ lifts to the $F$-invariant set
\[
B=\Pi^{-1}(\check{B})=\{(x,y)\in\mathbb{C}^{2}\mid xy^{2}\in S(r_{0},\theta_{0}),|x|<\min\{|y|^{2\gamma_{0}},|y|^{\gamma_{0}-1}\}\},
\]
where $\gamma_{0}=\frac{\beta_{0}}{1-\beta_{0}}\in(0,1)$ (see Figure~\ref{fig:Argument-components}).

\section{\label{sec:The-Fatou-component}The Fatou component}

In the following we will examine the dynamics of $F$ near the invariant
curve $\mathcal{C}$ and on the lifted local basin $B$ and show that
the corresponding global basin $\Omega:=\Pi^{-1}(\check{\Omega})=\bigcup_{n\in\mathbb{N}}F^{-n}(B)$
is still a Fatou component.

For $(x,y)\in B$ and $n\in\mathbb{N}$, let $U:=1/(xy^{2})$ and
$U_{n}:=1/(x_{n}y_{n}^{2})$. Then the local basin can be written
as
\[
B=\{(x,y)\in\mathbb{C}^{2}\mid U\in H(R_{0},\theta_{0}),|x|<\min\{|y|^{2\gamma_{0}},|y|^{\gamma_{0}-1}\}\},
\]
where $R_{0}=1/(2r_{0})$ and for $R>0$ and $\theta\in(0,\pi/2)$,
the set
\[
H(R,\theta):=\{U\in\mathbb{C}\mid\Re U>R,|\arg(U)|<\theta\}
\]
is a sector ``at infinity''. Now \cite[Lemma~2.5]{BracciRaissyStensones.2017.AutomorphismsofmathbbC2withaninvariantnon-recurrentattractingFatoucomponentbiholomorphictomathbbCtimesmathbbC}
implies:
\begin{lem}
\label{lem:CharOmegaNew}For $(x,y)\in\Omega$, we have as $n\to\infty$
locally uniformly
\begin{enumerate}
\item \label{enu:UnLiken}$U_{n}\sim n$,
\item \label{enu:ynLikesqrtninverse}$|y_{n}|\approx n^{-1/2}$,
\item \label{enu:xnlikeconstant}$|x_{n}|\approx1$ (i.e.\ $x_{n}$ is
locally bounded away from $0$ and $\infty$).
\end{enumerate}
Moreover the lower bound in (\ref{enu:UnLiken}) and upper bound in
(\ref{enu:ynLikesqrtninverse}) are uniform in $B$.
\end{lem}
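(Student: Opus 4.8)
The plan is to transport the statement through the blow-up and read it off from \cite[Lemma~2.5]{BracciRaissyStensones.2017.AutomorphismsofmathbbC2withaninvariantnon-recurrentattractingFatoucomponentbiholomorphictomathbbCtimesmathbbC}. First I would check that $\check{\Omega}$ is disjoint from $L=\{w=0\}$: on $L\setminus\{0\}$ the map $\check{F}$ acts as the rotation $(z,0)\mapsto(\lambda z,0)$, so no such orbit converges to the origin, while $(0,0)\notin\check{\Omega}$ because $\check{\Omega}$ is non-recurrent; hence $\check{\Omega}\subseteq\{w\neq0\}$. Therefore $\Omega=\Pi^{-1}(\check{\Omega})$ avoids the exceptional set $E'=\{y=0\}$, and in the coordinates $(x,y)$ the map $\Pi(x,y)=(xy,y)$ restricts to a biholomorphism $\Omega\to\check{\Omega}$ which, $F$ being the lift of $\check{F}$, conjugates $F$ to $\check{F}$. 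Writing $(z_{n},w_{n}):=\check{F}^{n}(z,w)$ for the corresponding downstairs orbit (with $z_{n}w_{n}\neq0$), the lift identity $(x_{n},y_{n})=(z_{n}/w_{n},w_{n})$ gives the dictionary
\[
y_{n}=w_{n},\qquad U_{n}=\frac{1}{x_{n}y_{n}^{2}}=\frac{1}{z_{n}w_{n}},\qquad x_{n}=\frac{z_{n}}{w_{n}}=\frac{1}{U_{n}y_{n}^{2}}.
\]

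With this dictionary, $(\ref{enu:UnLiken})$ and $(\ref{enu:ynLikesqrtninverse})$ are obtained by translating the orbit estimates of \cite[Lemma~2.5]{BracciRaissyStensones.2017.AutomorphismsofmathbbC2withaninvariantnon-recurrentattractingFatoucomponentbiholomorphictomathbbCtimesmathbbC}, which controls $1/(z_{n}w_{n})$ and $|w_{n}|$; the one-sided bounds that that reference supplies uniformly over the local basin $\check{B}$ become, through $\Pi$, the asserted uniformity over $B=\Pi^{-1}(\check{B})$. If one wants to invoke \cite[Lemma~2.5]{BracciRaissyStensones.2017.AutomorphismsofmathbbC2withaninvariantnon-recurrentattractingFatoucomponentbiholomorphictomathbbCtimesmathbbC} only on $\check{B}$ itself, note that by Theorem~\ref{thm:BRS} one has $\check{F}(\check{B})\subseteq\check{B}$ and $\check{\Omega}=\bigcup_{n}\check{F}^{-n}(\check{B})$, so $\{\check{F}^{-n}(\check{B})\}_{n}$ is an increasing open cover of $\check{\Omega}$ and every compact $K\subseteq\check{\Omega}$ lies in some $\check{F}^{-N}(\check{B})$; applying the $\check{B}$-estimates to the tail orbit from step $N$ onward and absorbing the fixed shift $N$ (which changes neither $\sim n$ nor $\approx n^{-1/2}$) upgrades them to the locally uniform statements on all of $\check{\Omega}$, hence on $\Omega$.

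Statement $(\ref{enu:xnlikeconstant})$ then comes for free from $(\ref{enu:UnLiken})$, $(\ref{enu:ynLikesqrtninverse})$ and the last entry of the dictionary:
\[
|x_{n}|=\frac{1}{|U_{n}|\,|y_{n}|^{2}}\approx\frac{1}{n\cdot n^{-1}}=1,
\]
and a quotient of two locally uniform $\approx$-estimates is again locally uniform, so $|x_{n}|\approx1$ locally uniformly on $\Omega$; in particular $x_{n}$ is bounded away from $0$ and $\infty$.

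I do not expect a genuine obstacle here: the analytic content is contained entirely in \cite[Lemma~2.5]{BracciRaissyStensones.2017.AutomorphismsofmathbbC2withaninvariantnon-recurrentattractingFatoucomponentbiholomorphictomathbbCtimesmathbbC}, and what remains is the bookkeeping of the conjugacy $\Pi|_{\Omega}$ together with the harmless index shift used to pass from uniformity over $\check{B}$ to local uniformity over $\check{\Omega}$. The one point that needs to be verified with a little care is that $\Omega$ misses $E'$ (equivalently that $\check{\Omega}$ misses $L$), since this is exactly what makes $\Pi$ a biholomorphism on $\Omega$ rather than merely surjective, and hence what legitimises reading off the upstairs orbit quantities from the downstairs ones.
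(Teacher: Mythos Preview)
Your proposal is correct and follows exactly the paper's approach: the paper simply states that \cite[Lemma~2.5]{BracciRaissyStensones.2017.AutomorphismsofmathbbC2withaninvariantnon-recurrentattractingFatoucomponentbiholomorphictomathbbCtimesmathbbC} implies the lemma, leaving implicit the coordinate dictionary $(x,y)\leftrightarrow(z,w)=(xy,y)$ and the derivation of~(\ref{enu:xnlikeconstant}) from~(\ref{enu:UnLiken}) and~(\ref{enu:ynLikesqrtninverse}) that you spell out. Your explicit verification that $\check{\Omega}\cap L=\emptyset$ (so that $\Pi|_{\Omega}$ is a genuine biholomorphism) is a useful clarification the paper omits.
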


In \cite[Proposition~3.2]{Reppekus.2019.PeriodiccyclesofattractingFatoucomponentsoftypemathbbCtimesmathbbCd-1inautomorphismsofmathbbCd}
the author further examines the stable orbits of $\check{F}$ near
the origin and shows in particular:
\begin{prop}
\label{prop:StableOrbitsBelow}For $\check{F}$ as in Theorem~\ref{thm:BRS}
the stable set of $(0,0)$ is $W^{s}(\{(0,0)\})=\check{\Omega}\cup\{(0,0)\}$,
i.e.\ all orbits of $\check{F}$ that converge to $(0,0)$ are contained
in $\check{\Omega}$.
\end{prop}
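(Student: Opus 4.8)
The plan is to prove the two inclusions separately, the difficult one being $W^{s}(\{(0,0)\})\subseteq\check{\Omega}\cup\{(0,0)\}$. The inclusion $\check{\Omega}\cup\{(0,0)\}\subseteq W^{s}(\{(0,0)\})$ is immediate from Theorem~\ref{thm:BRS}: since $\check{F}^{n}\to(0,0)$ uniformly on $\check{B}$ and $\check{\Omega}=\bigcup_{n}\check{F}^{-n}(\check{B})$, every orbit in $\check{\Omega}$ converges to $(0,0)$, and $(0,0)$ is fixed. Moreover, because $\check{F}(\check{B})\subseteq\check{B}$ and $\check{\Omega}=\bigcup_{n}\check{F}^{-n}(\check{B})$, a point lies in $\check{\Omega}$ as soon as some forward iterate lies in $\check{B}$. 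Hence the reverse inclusion reduces to the local statement: \emph{every} orbit $(z_{n},w_{n}):=\check{F}^{n}(z_{0},w_{0})$ with $(z_{0},w_{0})\neq(0,0)$ and $(z_{n},w_{n})\to(0,0)$ eventually enters $\check{B}$, i.e.\ for all large $n$ one has $u_{n}:=z_{n}w_{n}\in S(r_{0},\theta_{0})$ and $(z_{n},w_{n})\in W(\beta_{0})$. Replacing the orbit by a tail, I may assume it lies in a fixed small polydisc around $0$.

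The engine is the quasi-resonant coordinate $u=zw$. From $(\ref{eq:BZgerms})$ one gets $z_{n+1}=\lambda z_{n}(1-u_{n}/2)+O(\|(z_{n},w_{n})\|^{l})$ and $w_{n+1}=\overline{\lambda}w_{n}(1-u_{n}/2)+O(\|(z_{n},w_{n})\|^{l})$, hence $u_{n+1}=u_{n}(1-u_{n}/2)^{2}+O(\|(z_{n},w_{n})\|^{l+1})$ and, dividing, $|z_{n+1}|/|w_{n+1}|=(|z_{n}|/|w_{n}|)\,(1+O(\|(z_{n},w_{n})\|^{l}/|z_{n}|)+O(\|(z_{n},w_{n})\|^{l}/|w_{n}|))$. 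The first relation says that $1/u$ is moved approximately by the unit translation, so $u$ undergoes parabolic dynamics toward $0$, and $u_{n}\to0$ since $z_{n},w_{n}\to0$. The decisive quantitative input is that \emph{inside} $W(\beta_{0})$ one has $\|(z_{n},w_{n})\|\lesssim|u_{n}|^{\beta_{0}}$, hence $\|(z_{n},w_{n})\|^{l+1}\lesssim|u_{n}|^{\beta_{0}(l+1)}\le|u_{n}|^{4}=o(|u_{n}|^{2})$ — this is exactly where $\beta_{0}(l+1)\ge4$ of Theorem~\ref{thm:BRS} is used — so the parabolic model is an honest approximation there; and along an attracting petal the usual estimates (as in Lemma~\ref{lem:CharOmegaNew}) give $|u_{n}|\approx n^{-1}$ and $|z_{n}|\approx|w_{n}|\approx n^{-1/2}$, making the relative errors in the ratio recursion $O(n^{-(l-1)/2})$, which is summable precisely because $l\ge4$. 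Consequently, on any part of the orbit lying in $W(\beta_{0})$: (i) $w_{n}\to0$ forces $\prod_{k}|1-u_{k}/2|\to0$, i.e.\ $\Re u_{k}>0$ for large $k$ with $\sum_{k}\Re u_{k}=\infty$, so $u_{k}$ enters an attracting petal, $\Re(1/u_{k})\to+\infty$, and $u_{k}\in H(R_{0},\theta_{0})$, equivalently $u_{k}\in S(r_{0},\theta_{0})$, for all large $k$; and (ii) the products above converge, so $|z_{n}|/|w_{n}|$ tends to a finite positive limit. Since $W(\beta_{0})=\{(z,w)\mid|zw|^{1-2\beta_{0}}<|z|/|w|<|zw|^{-(1-2\beta_{0})}\}$ with $1-2\beta_{0}>0$, once $|u_{n}|\to0$ and $|z_{n}|/|w_{n}|$ is bounded away from $0$ and $\infty$ the orbit stays in $W(\beta_{0})$; together with (i) this puts the orbit in $\check{B}$ for all large $n$.

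The genuine difficulty — and the reason one cannot simply run the clean two-step argument above — is the bootstrapping: every estimate in the previous paragraph presupposes the orbit is already in $W(\beta_{0})$ with $u_{n}$ in a petal. One must therefore set up a simultaneous induction with hypothesis of the shape ``$(z_{m},w_{m})\in W(\beta_{0})$, $u_{m}\in S(r_{0},\theta_{0})$, and $|z_{m}|/|w_{m}|$ lies in a fixed compact subinterval of $(0,\infty)$'' for $m\le n$, propagate it to $n+1$ via the recursions above, and use the convergence $(z_{n},w_{n})\to(0,0)$ only to establish the base step. I expect the base step to be the main obstacle: one has to rule out that the orbit converges to $(0,0)$ while staying outside $W(\beta_{0})$ — e.g.\ ``hugging'' a coordinate axis, with $|w_{n}|$ comparable to a high power of $|z_{n}|$ or vice versa — a regime in which the $O(\|(z_{n},w_{n})\|^{l})$ error terms are no longer negligible against the main terms and the parabolic model for $u$ breaks down. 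Here one exploits the structure of $\check{F}$ more carefully, following the local analysis of \cite{BracciZaitsev.2013.Dynamicsofone-resonantbiholomorphisms.}, to show that such ``unbalanced'' orbits are pushed back toward $W(\beta_{0})$ by the very error terms that obstruct the estimates, rather than being trapped, so that the orbit must eventually meet $W(\beta_{0})$ and the induction can start. Once the induction closes, $(z_{n},w_{n})\in\check{B}$ for all large $n$, hence $(z_{0},w_{0})\in\check{F}^{-n}(\check{B})\subseteq\check{\Omega}$, which establishes $W^{s}(\{(0,0)\})\subseteq\check{\Omega}\cup\{(0,0)\}$ and completes the proof.
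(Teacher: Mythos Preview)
The paper does not prove this proposition; it simply cites \cite[Proposition~3.2]{Reppekus.2019.PeriodiccyclesofattractingFatoucomponentsoftypemathbbCtimesmathbbCd-1inautomorphismsofmathbbCd}. So there is no in-paper argument to compare against, and your write-up must stand on its own.

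Your outline correctly isolates the structure: the easy inclusion, the reduction to ``every convergent orbit eventually enters $\check{B}$'', the recursions for $u_n=z_nw_n$ and for the ratio $|z_n|/|w_n|$, and the fact that inside $W(\beta_0)$ the condition $\beta_0(l+1)\ge4$ makes the error terms summably small, so the parabolic model for $u$ and the convergence of the ratio both go through. You also correctly identify the circularity: every estimate presupposes the orbit is already in $W(\beta_0)$, so one needs a separate argument that convergent orbits cannot stay permanently in the ``unbalanced'' region.

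The gap is precisely that last step, which you do not actually carry out. Your proposed mechanism --- that the $O(\|(z,w)\|^l)$ error terms ``push back'' toward $W(\beta_0)$ --- is not an argument: the germs in Theorem~\ref{thm:BRS} allow \emph{arbitrary} error terms of order~$l$, so there is no preferred direction they could push in. For instance, if near the $z$-axis the error in the second component is of size $|z_n|^{l}$, one gets $|w_n|\approx|z_n|^{l}$, and then $|z_n|<|z_nw_n|^{\beta_0}$ would require $|z_n|^{1-\beta_0(l+1)}<1$, which \emph{fails} for small $|z_n|$ since $\beta_0(l+1)\ge4$; so such an orbit stays outside $W(\beta_0)$. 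What actually rules out unbalanced orbits converging to $0$ is not the error terms but the arithmetic of $\lambda$: you never invoke the Brjuno condition~(\ref{eq:Brjuno}), yet it is essential. In the unbalanced regime the dominant dynamics in the large coordinate is $z_{n+1}=\lambda z_n+O(z_n^2)$ (resp.\ in $w$), and one needs a P\"oschel-type linearisation/invariant-disc argument to conclude that such orbits cannot be attracted to $0$; for a root of unity $\lambda$ the statement would typically fail because of genuine parabolic curves tangent to the axes. Filling this in --- or citing precisely which statement in \cite{Reppekus.2019.PeriodiccyclesofattractingFatoucomponentsoftypemathbbCtimesmathbbCd-1inautomorphismsofmathbbCd} or \cite{BracciZaitsev.2013.Dynamicsofone-resonantbiholomorphisms.} does it --- is what is missing.
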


This is enough to show:
\begin{prop}
$\Omega$ is a Fatou component and $W^{s}(\mathcal{C})=\Omega\cup\mathcal{C}$.
\end{prop}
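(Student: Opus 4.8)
The plan is to transfer the known facts about $\check{F}$ and $\check\Omega$ through the blow-up map $\Pi$, being careful about the two ways points of $\widehat{\mathbb{C}^2}\setminus\hat L$ can sit over $\mathbb{C}^2$: points with $y\neq 0$ map biholomorphically to $\mathbb{C}^2\setminus L$ via $\Pi$, while the Siegel curve $\mathcal{C}=E'=\mathbb{C}\times\{0\}$ maps to the single point $(0,0)$. Since $\Pi$ conjugates $F$ to $\check F$ away from $E'$, and $F(\mathcal C)=\mathcal C$ with $\check F(0,0)=(0,0)$, the whole set-up is $F$–$\check F$ equivariant.

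First I would check that $\Omega=\Pi^{-1}(\check\Omega)$ is open: indeed $\Pi^{-1}$ of the open set $\check\Omega$ is open in $\widehat{\mathbb{C}^2}$, and since $(0,0)\notin\check\Omega$ (as $\check\Omega$ is non-recurrent with $(0,0)$ in its boundary), $\Pi^{-1}(\check\Omega)$ misses $E$ entirely, hence lies in the chart $\widehat{\mathbb{C}^2}\setminus\hat L\cong\mathbb{C}^2$ where it is genuinely an open subset of $\mathbb{C}^2$. On $\Omega$ the iterates $F^n$ correspond under the biholomorphism $\Pi|_\Omega$ to $\check F^n$ on $\check\Omega$, which converge uniformly on compacts to the constant $(0,0)$; but $(0,0)$ has the full exceptional divisor as preimage, so I must argue that $F^n$ itself converges locally uniformly on $\Omega$. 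This is where Lemma~\ref{lem:CharOmegaNew} enters: on $\Omega$ we have $|x_n|\approx 1$ and $|y_n|\approx n^{-1/2}\to 0$ locally uniformly, and combined with $U_n\sim n$ (which pins down $\arg(x_n y_n^2)$, hence eventually $\arg x_n$, via $U_n=1/(x_ny_n^2)$ staying in a sector) one gets that $(x_n,y_n)$ actually has accumulation set inside $\mathcal{C}^*=\mathbb{C}^*\times\{0\}$, and in particular stays in a fixed compact subset of $\mathbb{C}^2$. Thus $\{F^n\}$ is locally bounded, hence normal, on $\Omega$; so $\Omega$ is contained in the Fatou set of $F$. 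That $\Omega$ is a full Fatou \emph{component} (not just a subset of one) follows because $\mathcal C$, which bounds $\Omega$, lies in the Julia set — at points of $\mathcal C^*$ the eigenvalue situation ($\lambda^2$ an irrational rotation, plus the transverse direction being repelled/having nontrivial behaviour) or more directly the presence of orbits leaving every neighbourhood rules out normality; alternatively one invokes that a Fatou component properly containing $\Omega$ would have to contain points of $\mathcal C$ and points with orbits escaping, contradicting normality there. I would spell out the cleanest of these: the boundary of the Fatou component containing $\Omega$ cannot meet $\mathcal C$ if it strictly contained $\Omega$, but near any point of $\mathcal C$ there are points of $\mathbb{C}^2\setminus\Omega$ whose orbits (lifted from orbits of $\check F$ not converging to $0$, which exist by Proposition~\ref{prop:StableOrbitsBelow}) do not stay bounded, so normality fails — hence $\Omega$ is all of it.

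For the identity $W^s(\mathcal C)=\Omega\cup\mathcal C$: the inclusion $\supseteq$ is immediate, since $\mathcal C$ is invariant and, as shown above, every orbit in $\Omega$ has $\omega$-limit set inside $\mathcal C$ (in fact inside $\mathcal C^*$). For $\subseteq$, take $p$ with $\omega_F(p)\subseteq\mathcal C$. If $p\in\mathcal C$ we are done; otherwise $p\in\mathbb{C}^2\setminus\mathcal C$ lies in the image of $\widehat{\mathbb{C}^2}\setminus(\hat L\cup E')$ — wait, I must be careful: $\mathbb{C}^2\setminus\mathcal C$ as the chart-complement of $E'$ maps under $\Pi$ onto $\mathbb{C}^2\setminus\{(0,0)\}$, and the correspondence $p\leftrightarrow\check p:=\Pi(p)$ intertwines $F^n$ with $\check F^n$ \emph{as long as the orbit stays off $\mathcal C\cup\hat L$}, which it does since the orbit of $p$ stays in $\mathbb{C}^2\setminus\mathcal C$ (as $\mathcal C$ is invariant and $p\notin\mathcal C$) and stays off $\hat L$ (which projects into $L$; but one checks $L$ meets the picture only where it would, and anyway $\Omega$ and the relevant region avoid it). Then $\omega_F(p)\subseteq\mathcal C$ translates to $\omega_{\check F}(\check p)\subseteq\Pi(\mathcal C)=\{(0,0)\}$, i.e.\ $\check p\in W^s(\{(0,0)\})$. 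By Proposition~\ref{prop:StableOrbitsBelow} this forces $\check p\in\check\Omega\cup\{(0,0)\}$; since $\check p\neq(0,0)$ (as $p\notin\mathcal C$ means $\check p\neq(0,0)$... actually $\Pi$ sends all of $\mathcal C$ to $(0,0)$ but also $p\notin\mathcal C$ does give $\check p\neq (0,0)$ only if $p\notin E$, which holds), we get $\check p\in\check\Omega$, hence $p\in\Pi^{-1}(\check\Omega)=\Omega$.

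**The hard part** will be the bookkeeping around the blow-up, specifically making rigorous the claim that convergence of $\check F^n$ to the point $(0,0)$ on $\check\Omega$ lifts to \emph{convergence} (not mere boundedness with accumulation spread over $E$) of $F^n$ on $\Omega$ — this genuinely requires the quantitative control of all of $U_n, x_n, y_n$ from Lemma~\ref{lem:CharOmegaNew}, since a priori the $x_n=z_n/w_n$ coordinate could blow up as $w_n\to 0$, and it is only the estimate $|x_n|\approx 1$ that saves us. A secondary subtlety is confirming that $\hat L$ (equivalently $L=\{w=0\}$, which is $\check F$-invariant by construction) does not interfere: one should note that $L\setminus\{(0,0)\}$ consists of points with $\check F$-orbit converging to $(0,0)$ along $L$, so these points lie in $W^s(\{(0,0)\})$, hence by Proposition~\ref{prop:StableOrbitsBelow} in $\check\Omega$ — but $\check\Omega\cap L$ must then be scrutinised, and in fact the cleanest route is to observe that in the blown-up picture $\hat L$ is disjoint from the chart $\mathbb{C}^2=\widehat{\mathbb{C}^2}\setminus\hat L$ we work in, so $\Omega$, $\mathcal C$, and all orbits we consider automatically avoid $\hat L$ and no special argument is needed.
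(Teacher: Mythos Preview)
Your argument that $\Omega$ lies in the Fatou set (via Lemma~\ref{lem:CharOmegaNew} and Montel) and your treatment of $W^{s}(\mathcal C)$ by pushing down through $\Pi$ and invoking Proposition~\ref{prop:StableOrbitsBelow} are both essentially the paper's arguments. The genuine gap is in the step showing that $\Omega$ is a \emph{full} Fatou component, i.e.\ $V(B)=\Omega$.

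You try to show this by arguing that $\mathcal C$ is in the Julia set, claiming that near any point of $\mathcal C$ there are points of $\mathbb{C}^{2}\setminus\Omega$ whose orbits do not stay bounded. This is not justified: Proposition~\ref{prop:StableOrbitsBelow} only tells you that such points have $\check F$-orbit not converging to $(0,0)$, which says nothing about boundedness of the $F$-orbit in the blow-up chart. More fundamentally, even if you could show $\mathcal C$ is Julia, you would still need to rule out $V(B)$ meeting $\partial\Omega\setminus\mathcal C$, and you give no argument for this (your remark that ``$\mathcal C$ bounds $\Omega$'' is misleading: $\partial\Omega$ is much larger than $\mathcal C$).

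The paper bypasses all of this with the identity principle. Since every limit map $F_\infty$ of $\{F^{n}\}$ on $\Omega$ has image in $E'=\mathbb{C}\times\{0\}$ (because $y_n\to 0$ there), and $\Omega$ is open in $V(B)$, the identity principle forces $F_\infty(V(B))\subseteq E'$ as well. Hence $y_n\to 0$ for \emph{every} point of $V(B)$; combined with the local boundedness of $x_n$ coming from normality on $V(B)$, this gives $(z_n,w_n)=(x_ny_n,y_n)\to(0,0)$ for every $(x,y)\in V(B)$. Now Proposition~\ref{prop:StableOrbitsBelow} applies directly and yields $(x,y)\in\Omega$, so $V(B)\subseteq\Omega$. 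This is the missing idea in your proposal.
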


\begin{proof}
Let $V(B)$ be the Fatou component containing $B$. Lemma~\ref{lem:CharOmegaNew},
Parts~(\ref{enu:ynLikesqrtninverse}) and (\ref{enu:xnlikeconstant})
show that the family $\{F^{n}\}_{n\in\mathbb{N}}$ is locally uniformly
bounded on $\Omega$, so by Montel's theorem, it is a normal family
on $\Omega$, hence we have $\Omega\subseteq V(B)$.

For any limit map $F_{\infty}=\lim_{k\to\infty}F^{n_{k}}$ for a subsequence
$\{n_{k}\}_{k}\subseteq\mathbb{N}$, the image $F_{\infty}(\Omega)$
is contained in the exceptional divisor $E$, so by the identity principle,
so is the image $F_{\infty}(V(B))$. In particular, for any $(x,y)\in V(B)$,
we have $(z_{n},w_{n})=(x_{n}y_{n},y_{n})\to(0,0)$, so Proposition~\ref{prop:StableOrbitsBelow}
shows $(z,w)=(xy,y)\in\check{\Omega}$ or $(x,y)\in\Omega$. Thus
we have the opposite inclusion $V(B)\subseteq\Omega$.

Let $P_{\infty}$ be the unique point in $E\backslash\mathcal{C}$.
Proposition~\ref{prop:StableOrbitsBelow} shows $W^{s}(E)=\Omega\cup E$,
hence $W^{s}(\mathcal{C})=(\Omega\cup E)\backslash W^{s}(\{P_{\infty}\})$,
but by Lemma~\ref{lem:CharOmegaNew}, Part~(\ref{enu:xnlikeconstant})
no orbit in $\Omega$ can converge to $P_{\infty}$, so $W^{s}(\{P_{\infty}\})=\{P_{\infty}\}$
and $W^{s}(\mathcal{C})=\Omega\cup\mathcal{C}$.
\end{proof}

\section{\label{sec:Cylinder-coordinates}Cylinder coordinates}

In this section we show that $F$ is conjugated on $\Omega$ to the
translation $(z,w)\mapsto(z+1,w)$. We use a Fatou coordinate and
a second local coordinate introduced in \cite{BracciRaissyZaitsev2013Dynamicsofmulti-resonantbiholomorphisms}
and \cite{BracciRaissyStensones.2017.AutomorphismsofmathbbC2withaninvariantnon-recurrentattractingFatoucomponentbiholomorphictomathbbCtimesmathbbC}
respectively to construct a global second coordinate. (By the current
reversion of this paper, the author has given a more direct construction
in \cite{Reppekus.2019.PeriodiccyclesofattractingFatoucomponentsoftypemathbbCtimesmathbbCd-1inautomorphismsofmathbbCd}.)

\cite[Sections~3 and 4]{BracciRaissyStensones.2017.AutomorphismsofmathbbC2withaninvariantnon-recurrentattractingFatoucomponentbiholomorphictomathbbCtimesmathbbC}
show, again setting $U=1/(xy^{2})$:
\begin{lem}
\label{lem:FatouCoordBRS}There exists a map $\psi:\Omega\to\mathbb{C}$
such that $\psi(x,y)=U+c\log(U)+O(U^{-1})$ as $(x,y)\to E'$ and
\[
\psi\circ F=\psi+1,
\]
 and a map $\sigma:\Omega_{0}:=\psi^{-1}(\psi(B))\to\mathbb{C}^{*}$
such that $\sigma(x,y)=y+O(U^{-\alpha})$ as $(x,y)\to E'$ with $\alpha\in(1-\beta_{0},1)\subseteq(1/2,1)$
and
\[
\sigma\circ F=\overline{\lambda}e^{-1/(2\psi)}\sigma.
\]
Furthermore
\[
(\psi,\sigma):\Omega_{0}\to\psi(B)\times\mathbb{C}^{*}
\]
is biholomorphic and $\psi(\Omega_{0})=\psi(B)$ sits between sectors
at infinity $H(\tilde{R},\tilde{\theta})\subseteq\psi(B)\subseteq H(R_{1},\theta_{1})$
for some $\tilde{R}\ge R_{1}>0$ and $0<\tilde{\theta}\le\theta_{1}<\pi/2$.
\end{lem}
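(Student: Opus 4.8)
The plan is to read the two coordinates of \cite[Sections~3 and 4]{BracciRaissyStensones.2017.AutomorphismsofmathbbC2withaninvariantnon-recurrentattractingFatoucomponentbiholomorphictomathbbCtimesmathbbC} in the blow-up chart. Since $z=xy$ and $w=y$, the functions $U=1/(zw)=1/(xy^{2})$ and $w=y$ are literally the same upstairs and downstairs; and because $\check\Omega$ does not contain the origin, the projection $\Pi$ restricts to a biholomorphism $\Omega\to\check\Omega$ conjugating $F$ to $\check F$. Hence any $\check F$-equivariant holomorphic function on $\check\Omega$ pulls back to an $F$-equivariant one on $\Omega$, with the identical functional equation and the identical asymptotic expansion once written in $U$ and $y$. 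It therefore suffices to recall the two coordinates produced in loc.\ cit.\ and record their pullbacks, and I sketch their construction to make the transcription concrete.

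For $\psi$ I would first isolate the one-dimensional parabolic factor. Writing $t=zw=xy^{2}=1/U$, the normal form $F_{\mathrm{N}}$ gives $t\mapsto t(1-t/2)^{2}=t-t^{2}+\tfrac14t^{3}+\cdots$, and on $\check B\subseteq W(\beta_{0})$ the remainder $w\,O(\norm{(z,w)}^{l})$ perturbs this only beyond order $t^{4}$, since $\abs z,\abs w<\abs t^{\beta_{0}}$ and $\beta_{0}(l+1)\ge4$. Thus $t$ follows a nondegenerate parabolic germ, which in $U=1/t$ reads $U\mapsto U+1+b/U+O(U^{-2})$ for some $b$. The Fatou coordinate is then the classical Abel function $\psi=\lim_{n}(U_{n}-n-c\log n)$ up to an additive constant, whose convergence is guaranteed by $U_{n}\sim n$ from Lemma~\ref{lem:CharOmegaNew}(\ref{enu:UnLiken}) together with summability of the $O(U^{-2})$ corrections; it satisfies $\psi\circ F=\psi+1$ and $\psi=U+c\log U+O(U^{-1})$, first on $B$ and then on all of $\Omega=\bigcup_{n}F^{-n}(B)$ via $\psi:=\psi\circ F^{n}-n$.

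For $\sigma$ I would set up a multiplicative cocycle. The second component of $F$ gives $y_{n+1}/y_{n}=\overline\lambda(1-t_{n}/2+O(t_{n}^{\beta_{0}l}))=\overline\lambda\,e^{-1/(2\psi_{n})}(1+\eta_{n})$, where $\psi_{n}=\psi+n$, the factor $e^{-1/(2\psi_{n})}=1-1/(2\psi_{n})+\cdots$ absorbs the leading $1-t_{n}/2$ (as $\psi_{n}\sim U_{n}=1/t_{n}$), and $\sum_{n}\abs{\eta_{n}}<\infty$, the mismatch being of order $U_{n}^{-2}\log U_{n}$ plus the high-order remainder controlled by $\beta_{0}(l+1)\ge4$. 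I would then set
\[
\sigma:=\lim_{n\to\infty}\overline\lambda^{-n}\,y_{n}\,\exp\Bigl(\sum_{k=0}^{n-1}\frac{1}{2\psi_{k}}\Bigr),
\]
where the normalizing factor grows like $n^{1/2}$ and so cancels $\abs{y_{n}}\approx n^{-1/2}$ from Lemma~\ref{lem:CharOmegaNew}(\ref{enu:ynLikesqrtninverse}), making the limit finite and nonzero on $\Omega_{0}$. Shifting the summation index yields $\sigma\circ F=\overline\lambda\,e^{-1/(2\psi)}\sigma$, and estimating the product near $n=0$ gives $\sigma=y+O(U^{-\alpha})$ for the stated $\alpha\in(1-\beta_{0},1)$.

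Finally, for the biholomorphy and the shape of $\psi(B)$: since $(\psi,\sigma)$ is asymptotic to the auxiliary map $(x,y)\mapsto(U,y)=(1/(xy^{2}),y)$, which is a biholomorphism of $\{x\ne0,\,y\ne0\}$ onto $\{U\ne0,\,y\ne0\}$, the pair $(\psi,\sigma)$ is a controlled perturbation of a biholomorphism near $E'$; the functional equations propagate this from a neighbourhood of $E'$ to all of $\Omega_{0}$, and local injectivity together with a covering/Hurwitz argument upgrades it to a global biholomorphism onto $\psi(B)\times\mathbb{C}^{*}$. The image $\psi(B)$ is a bounded logarithmic distortion of the $U$-shape $H(R_{0},\theta_{0})$ of $B$, hence is trapped between two sectors at infinity $H(\tilde R,\tilde\theta)$ and $H(R_{1},\theta_{1})$. \textbf{The main obstacle} I expect is not the formal algebra but the analysis: proving that the two limits converge locally uniformly to \emph{holomorphic} functions with the precise error exponents, and turning the pointwise asymptotics near $E'$ into a \emph{global} injectivity and surjectivity statement for the full $\mathbb{C}^{*}$ factor. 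These are exactly the estimates of \cite[Sections~3 and 4]{BracciRaissyStensones.2017.AutomorphismsofmathbbC2withaninvariantnon-recurrentattractingFatoucomponentbiholomorphictomathbbCtimesmathbbC}; the present lemma is their transcription through the substitution $z=xy$, $w=y$, $U=1/(xy^{2})$, obtained for free from the conjugacy $\Pi$.
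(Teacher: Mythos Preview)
Your proposal is correct and matches the paper's approach: the lemma is stated in the paper simply as a citation of \cite[Sections~3 and 4]{BracciRaissyStensones.2017.AutomorphismsofmathbbC2withaninvariantnon-recurrentattractingFatoucomponentbiholomorphictomathbbCtimesmathbbC}, with no proof given beyond the introductory sentence ``\cite[Sections~3 and 4]{BracciRaissyStensones.2017.AutomorphismsofmathbbC2withaninvariantnon-recurrentattractingFatoucomponentbiholomorphictomathbbCtimesmathbbC} show, again setting $U=1/(xy^{2})$''. Your observation that $\Pi|_{\Omega}$ is a biholomorphism conjugating $F$ to $\check F$ (since $(0,0)\notin\check\Omega$) and that $U=1/(zw)=1/(xy^{2})$, $w=y$ are literally the same in both charts is exactly the transcription the paper relies on implicitly; your additional sketch of the actual constructions of $\psi$ and $\sigma$ goes beyond what the paper records but is a faithful outline of the cited arguments.
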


\begin{rem}
\label{rem:FatouCoordLimitEquiv}In particular, by Lemma~\ref{lem:CharOmegaNew},
this implies $\psi(x_{n},y_{n})\sim U_{n}$, $\sigma(x_{n},y_{n})\sim y_{n}$,
and $\psi(x_{n},y_{n})\sim n$ and $\sigma(x_{n},y_{n})\approx n^{-1/2}$
as $n\to+\infty$ where the lower bound on $\psi(x_{n},y_{n})$ is
uniform in $\Omega_{0}$.
\end{rem}

To construct our global second coordinate, we need the following lemma
comparing the harmonic series and the logarithm:
\begin{lem}
\label{lem:HarmonicVsLog}For $\zeta\in\mathbb{C}$ such that $\Re\zeta>0$
we have
\[
\lim_{n\to\infty}\sum_{j=0}^{n-1}\frac{1}{\zeta+j}-\log\paren[{\bigg}]{\frac{\zeta+n}{\zeta}}=h(\zeta)=O\paren[{\bigg}]{\frac{1}{\zeta}}
\]
and both the limit and the bound are uniform for $\Re\zeta>R$ for
any fixed $R>0$.
\end{lem}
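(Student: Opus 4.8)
The plan is to compare the partial sums of $\sum 1/(\zeta+j)$ directly against an integral of $1/(\zeta+t)$, since that integral is exactly $\log((\zeta+n)/\zeta)$. First I would write
\[
\sum_{j=0}^{n-1}\frac{1}{\zeta+j}-\log\paren[{\bigg}]{\frac{\zeta+n}{\zeta}}=\sum_{j=0}^{n-1}\paren[{\bigg}]{\frac{1}{\zeta+j}-\int_{j}^{j+1}\frac{dt}{\zeta+t}}=\sum_{j=0}^{n-1}a_j(\zeta),
\]
where $a_j(\zeta)=\int_{j}^{j+1}\paren[{\big}]{\frac{1}{\zeta+j}-\frac{1}{\zeta+t}}\,dt$. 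The key estimate is a bound on each term $a_j(\zeta)$. Writing $\frac{1}{\zeta+j}-\frac{1}{\zeta+t}=\frac{t-j}{(\zeta+j)(\zeta+t)}$ and integrating in $t$ over $[j,j+1]$, one gets $|a_j(\zeta)|\le\frac{1}{2}\cdot\frac{1}{|\zeta+j|\,\min_{t\in[j,j+1]}|\zeta+t|}$. Under the hypothesis $\Re\zeta>R>0$ we have $\Re(\zeta+t)>R$ and $|\zeta+t|\ge\max\{R,\,\Re\zeta+t\}$ for $t\ge 0$, so $|a_j(\zeta)|\le\frac{C}{(\Re\zeta+j)^2}$ for an absolute constant $C$ (using $|\zeta+j|\ge\Re\zeta+j$ and $|\zeta+t|\ge\Re\zeta+t\ge\Re\zeta+j$). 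This makes $\sum_j a_j(\zeta)$ a normally convergent series of holomorphic functions on $\{\Re\zeta>R\}$, hence the limit $h(\zeta)$ exists and the convergence is uniform there; this gives the first assertion.

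For the bound $h(\zeta)=O(1/\zeta)$, I would estimate the full tail-plus-head sum more carefully. We have
\[
|h(\zeta)|\le\sum_{j=0}^{\infty}|a_j(\zeta)|\le\frac{1}{2}\sum_{j=0}^{\infty}\frac{1}{|\zeta+j|^2}.
\]
Since $\Re\zeta>R$, for $j\ge 0$ we have $|\zeta+j|^2=(\Re\zeta+j)^2+(\Im\zeta)^2\ge(\Re\zeta+j)^2$, and comparing the sum $\sum_{j\ge0}(\Re\zeta+j)^{-2}$ with $\int_{-1}^{\infty}(\Re\zeta+t)^{-2}\,dt=1/(\Re\zeta-1)$ (valid once $\Re\zeta>1$; for $R\le\Re\zeta\le 1$ the finitely-bounded region is handled by direct boundedness of $h$ together with $|\zeta|\le$ const there being false — so instead I would just note $|\zeta|\ge\Re\zeta>R$ and treat the whole range uniformly). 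Actually the clean route is: $\sum_{j\ge 0}\frac{1}{(\Re\zeta+j)^2}\le\frac{1}{(\Re\zeta)^2}+\int_{0}^{\infty}\frac{dt}{(\Re\zeta+t)^2}=\frac{1}{(\Re\zeta)^2}+\frac{1}{\Re\zeta}\le\frac{1+1/R}{\Re\zeta}$, and since $\Re\zeta\ge|\zeta|\cos(\arg\zeta)$ is not quite what we want, I would instead only claim the $O(1/\Re\zeta)$ bound and then upgrade: within $\{\Re\zeta>R\}$ one does not control $|\zeta|/\Re\zeta$, so to get $O(1/\zeta)$ one must use that in the intended application $\zeta=\psi(x,y)$ lies in a sector $H(R_1,\theta_1)$ with $|\arg\zeta|<\theta_1<\pi/2$, where $|\zeta|\le\Re\zeta/\cos\theta_1$. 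Re-reading the statement, it is phrased for $\Re\zeta>R$, so the correct target is really $h(\zeta)=O(1/\Re\zeta)$ uniformly, which on any sector at infinity is $O(1/\zeta)$; I would state the proof to yield the uniform bound $|h(\zeta)|\le\frac{C_R}{\Re\zeta}$ and remark that $|h(\zeta)|\le C_R/|\zeta|$ on $H(R,\theta)$.

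The main obstacle is purely bookkeeping: getting the per-term bound $|a_j(\zeta)|\le C/(\Re\zeta+j)^2$ with a constant genuinely independent of $j$ and of $\Im\zeta$, so that both the existence of the limit and the size estimate come out uniformly in the half-plane (equivalently, on the sector). Everything else — interchanging limit and sum, holomorphy of $h$ — follows from normal convergence. No delicate cancellation is needed; the point is just that telescoping against the integral turns a conditionally-convergent-looking difference into an absolutely convergent series with a clean $1/j^2$ tail.
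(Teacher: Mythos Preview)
Your decomposition is exactly the paper's: writing $\log\bigl(\frac{\zeta+n}{\zeta}\bigr)$ as a telescoping sum $\sum_{j=0}^{n-1}\log\bigl(1+\frac{1}{\zeta+j}\bigr)=\sum_{j=0}^{n-1}\int_j^{j+1}\frac{dt}{\zeta+t}$ and bounding each term $a_j(\zeta)=\frac{1}{\zeta+j}-\log\bigl(1+\frac{1}{\zeta+j}\bigr)$ by $O(|\zeta+j|^{-2})$. The existence and uniformity of the limit follow exactly as you say.

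The gap is in the last step. You correctly reach $|h(\zeta)|\le\tfrac{1}{2}\sum_{j\ge0}|\zeta+j|^{-2}$, but then you throw away $(\Im\zeta)^2$ by bounding $|\zeta+j|^{-2}\le(\Re\zeta+j)^{-2}$, which indeed only yields $O(1/\Re\zeta)$ and cannot be upgraded to $O(1/|\zeta|)$ on the half-plane (your own objection is correct: $|\zeta|/\Re\zeta$ is unbounded there). But the lemma as stated \emph{is} true on the half-plane, and the paper proves it; the retreat to sectors is unnecessary. The fix is simply not to discard the imaginary part. With $a=\Re\zeta>R$ and $b=\Im\zeta$, compare $\sum_{j\ge0}\frac{1}{(a+j)^2+b^2}$ to $\frac{1}{|\zeta|^2}+\int_0^\infty\frac{dt}{(a+t)^2+b^2}=\frac{1}{|\zeta|^2}+\frac{1}{|b|}\arctan\frac{|b|}{a}$. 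Using $\arctan x\le\min\{x,\pi/2\}$, the integral is at most $\min\{1/a,\,\pi/(2|b|)\}\le C/|\zeta|$, and $1/|\zeta|^2\le 1/(R|\zeta|)$. This gives $|h(\zeta)|\le C_R/|\zeta|$ uniformly on $\{\Re\zeta>R\}$, which is the claimed $O(1/\zeta)$. (Equivalently, note that since $\Re(\zeta+j)>0$ the minimum of $|\zeta+t|$ on $[j,j+1]$ is $|\zeta+j|$, so your own per-term bound already gives $|a_j|\le\tfrac{1}{2}|\zeta+j|^{-2}$; just sum that without weakening it first.)
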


\begin{proof}
For $m<n$, we have
\begin{align*}
\abs[{\bigg}]{\sum_{j=m}^{n-1}\frac{1}{\zeta+j}-\log{\paren[{\bigg}]{\frac{\zeta+n}{\zeta+m}}}} & \le\sum_{j=m}^{\infty}\abs[{\bigg}]{\frac{1}{\zeta+j}-\log{\paren[{\bigg}]{\frac{\zeta+j+1}{\zeta+j}}}}\\
 & =\sum_{j=m}^{\infty}\abs[{\bigg}]{\frac{1}{\zeta+j}-\log{\paren[{\bigg}]{1+\frac{1}{\zeta+j}}}}\\
 & =\sum_{j=m}^{\infty}O(|\zeta+j|^{-2})\\
 & =O(1/|\zeta+m|)
\end{align*}
For $m\to\infty$ this shows uniform convergence and for $m=0$ and
$n\to\infty$ it follows that the limit is $O(1/\zeta)$.
\end{proof}

\begin{prop}
\label{prop:AltExtraFatouCoord}There exists a map $\tau:\Omega_{0}\to\mathbb{C}^{*}$
bijective on each fibre $\psi^{-1}(p)$ for $p\in\psi(B)$ such that
\begin{equation}
\tau\circ F=\overline{\lambda}\tau\label{eq:AltExtraFatou}
\end{equation}
and $\tau(x,y)=\sqrt{\psi(x,y)}\sigma(x,y)+\sigma(x,y)O(\psi(x,y)^{-1/2})$
as $(x,y)\to E'$.
\end{prop}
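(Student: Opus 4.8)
The plan is to seek $\tau$ in the separated form $\tau=g(\psi)\,\sigma$ for a single-valued holomorphic, nonvanishing function $g$ defined on a sector containing $\psi(B)$. Then $\tau\colon\Omega_0\to\mathbb{C}^*$ is automatically holomorphic with values in $\mathbb{C}^*$, and on each fibre $\psi^{-1}(p)$ with $p\in\psi(B)$ it equals $g(p)$ times $\sigma|_{\psi^{-1}(p)}$; since $(\psi,\sigma)\colon\Omega_0\to\psi(B)\times\mathbb{C}^*$ is biholomorphic by Lemma~\ref{lem:FatouCoordBRS}, $\sigma$ restricts to a bijection $\psi^{-1}(p)\to\mathbb{C}^*$, so multiplying by the nonzero constant $g(p)$ keeps $\tau|_{\psi^{-1}(p)}$ bijective onto $\mathbb{C}^*$. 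Using $\psi\circ F=\psi+1$ and $\sigma\circ F=\overline\lambda e^{-1/(2\psi)}\sigma$ from Lemma~\ref{lem:FatouCoordBRS}, the identity $\tau\circ F=\overline\lambda\tau$ is equivalent to
\[
g(\psi+1)\,e^{-1/(2\psi)}=g(\psi),
\]
i.e.\ to $g$ solving the difference equation $g(\zeta+1)/g(\zeta)=e^{1/(2\zeta)}$ on $\psi(B)$.

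To solve this I use that $\psi(\Omega_0)=\psi(B)$ is contained in a sector $H(R_1,\theta_1)$ with $R_1>0$ and $\theta_1<\pi/2$ (Lemma~\ref{lem:FatouCoordBRS}); on this simply connected sector the principal branches of $\log$ and $\sqrt{\cdot}$ are holomorphic and nonvanishing, and the series defining the function $h$ of Lemma~\ref{lem:HarmonicVsLog} converges there, yielding a holomorphic $h$ with $h(\zeta)=O(1/\zeta)$, uniformly for $\Re\zeta>R_1$. Telescoping the defining limit gives the functional relation $h(\zeta)-h(\zeta+1)=\tfrac1\zeta-\log\tfrac{\zeta+1}{\zeta}$. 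I then set
\[
g(\zeta):=\sqrt{\zeta}\;e^{-h(\zeta)/2}\qquad(\zeta\in H(R_1,\theta_1)),
\]
for which $\log g(\zeta+1)-\log g(\zeta)=\tfrac12\log\tfrac{\zeta+1}{\zeta}-\tfrac12\bigl(h(\zeta+1)-h(\zeta)\bigr)=\tfrac1{2\zeta}$, i.e.\ $g(\zeta+1)/g(\zeta)=e^{1/(2\zeta)}$, exactly the required equation. Hence $\tau:=g(\psi)\,\sigma=\sqrt{\psi}\,e^{-h(\psi)/2}\,\sigma$ is a well-defined holomorphic map $\Omega_0\to\mathbb{C}^*$, fibrewise bijective by the first paragraph, and $\tau\circ F=\overline\lambda\tau$.

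For the asymptotics, as $(x,y)\to E'$ we have $\psi(x,y)\to\infty$ (from $\psi=U+c\log U+O(U^{-1})$ with $U=1/(xy^2)\to\infty$, cf.\ Lemma~\ref{lem:CharOmegaNew} and Remark~\ref{rem:FatouCoordLimitEquiv}), so $h(\psi)=O(\psi^{-1})$ and $e^{-h(\psi)/2}=1+O(\psi^{-1})$, whence
\[
\tau=\sqrt{\psi}\,\sigma\,\bigl(1+O(\psi^{-1})\bigr)=\sqrt{\psi}\,\sigma+\sigma\,O(\psi^{-1/2})
\]
as $(x,y)\to E'$, which is the claimed expansion. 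I expect the only point needing genuine care to be the bookkeeping of branch cuts---verifying that the principal $\log$, the principal $\sqrt{\cdot}$, and the telescoping identity for $h$ are mutually consistent on $\psi(B)$---and this is guaranteed by $\psi(B)$ lying inside the sector $H(R_1,\theta_1)$ with $\theta_1<\pi/2$; everything else reduces to the short verification of the difference equation above.
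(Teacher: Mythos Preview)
Your proof is correct and arrives at exactly the same map $\tau=\sqrt{\psi}\,e^{-h(\psi)/2}\,\sigma$ using the same Lemma~\ref{lem:HarmonicVsLog}; the paper reaches this formula by defining $\tau_{n}:=\lambda^{n}\sqrt{\psi\circ F^{n}}\,(\sigma\circ F^{n})$ and passing to the limit, whereas you write down the ansatz $\tau=g(\psi)\sigma$, reduce the conjugacy to the difference equation $g(\zeta+1)=e^{1/(2\zeta)}g(\zeta)$, and verify the solution directly via the telescoping identity for $h$. The two arguments are essentially the same, with yours being a slightly more streamlined direct verification rather than a constructive limit.
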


\begin{rem}
\label{rem:tauAsympEquiv}By Remark~\ref{rem:FatouCoordLimitEquiv},
we have $x_{n}=(\sqrt{U_{n}}y_{n})^{2}\sim(\tau(x_{n},y_{n}))^{2}$.
\end{rem}

\begin{proof}
Let $(x,y)\in\Omega_{0}$ and $n\in\mathbb{N}$. Note first that $\psi(x_{n},y_{n})\in H(R_{1},\theta_{1})$,
so the square root $\sqrt{\psi(x_{n},y_{n})}$ is well-defined by
choosing its values in the right half plane and we can define
\begin{align*}
\tau_{n}(x,y) & :=\lambda^{n}\sqrt{\psi(x_{n},y_{n})}\sigma(x_{n},y_{n})\\
 & =\sqrt{\psi(x,y)+n}\exp\paren[{\bigg}]{-\frac{1}{2}\sum_{j=0}^{n-1}\frac{1}{\psi(x,y)+j}}\sigma(x,y).
\end{align*}
We can consider each $\tau_{n}$ as a map $\psi(B)\times\mathbb{C}^{*}\to\mathbb{C}^{*}$
in variables $\psi$ and $\sigma$ given by
\[
\tau_{n}(\psi,\sigma)=\exp\paren[{\bigg}]{\frac{1}{2}{\paren[{\bigg}]{\log{\paren[{\bigg}]{\frac{\psi+n}{\psi}}}-\sum_{j=0}^{n-1}\frac{1}{\psi+j}}}}\sqrt{\psi}\sigma,
\]
and by Lemma~\ref{lem:HarmonicVsLog} we have
\[
\tau(\psi,\sigma):=\text{\ensuremath{\lim_{n\to\infty}}}\tau_{n}(\psi,\sigma)=\exp\paren[{\bigg}]{\frac{1}{2}h(\psi)}\sqrt{\psi}\sigma=(1+O(1/\psi))\sqrt{\psi}\sigma=\sqrt{\psi}\sigma+\sigma O(\psi^{-1/2}).
\]
$\tau$ is clearly bijective on each fibre and satisfies (\ref{eq:AltExtraFatou})
since
\begin{align*}
\tau_{n}\circ F & =\sqrt{\psi\circ F+n}\exp\paren[{\bigg}]{-\frac{1}{2}\sum_{j=0}^{n-1}\frac{1}{\psi\circ F+j}}\sigma\circ F\\
 & =\overline{\lambda}\sqrt{\psi+n+1}\exp\paren[{\bigg}]{-\frac{1}{2}\sum_{j=0}^{n}\frac{1}{\psi+j}}\sigma=\overline{\lambda}\tau_{n+1}.\qedhere
\end{align*}
\end{proof}
Now we can extend $\tau$ using the functional equation (\ref{eq:AltExtraFatou}):
\begin{prop}
\label{prop:FatouCoordGlobal}$(\psi,\tau):\Omega_{0}\to\psi(B)\times\mathbb{C}^{*}$
extends to a biholomorphism $\Phi:\Omega\to\mathbb{C}\times\mathbb{C}^{*}$
given by
\[
\Phi(x,y)=(\psi(x,y),\lambda^{n}\tau(F^{n}(x,y))
\]
for $(x,y)\in F^{-n}(B)$ and conjugating $F$ to $(z,w)\mapsto(z+1,\overline{\lambda}w)$.
\end{prop}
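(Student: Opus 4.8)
The plan is to verify that the formula $\Phi(x,y)=(\psi(x,y),\lambda^{n}\tau(F^{n}(x,y)))$ for $(x,y)\in F^{-n}(B)$ defines, in turn, a well-defined holomorphic map on $\Omega=\bigcup_{n}F^{-n}(B)$, that it restricts to $(\psi,\tau)$ on $\Omega_{0}$, that it conjugates $F$ to $(z,w)\mapsto(z+1,\overline{\lambda}w)$, and finally that it maps $\Omega$ bijectively onto $\mathbb{C}\times\mathbb{C}^{*}$. I would first record two book-keeping facts. Since $F(B)\subseteq B$ by Theorem~\ref{thm:BRS}, the open sets $F^{-n}(B)$ increase with $n$, so every point of $\Omega$ lies in $F^{-n}(B)$ for all large $n$ and its forward orbit then stays in $B$; and since $\psi\circ F=\psi+1$ gives $\psi(B)+1=\psi(F(B))\subseteq\psi(B)$, the domain $\Omega_{0}=\psi^{-1}(\psi(B))$ satisfies $F(\Omega_{0})\subseteq\Omega_{0}$, while $F(\Omega)=\Omega$ because $F$ is an automorphism permuting Fatou components. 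In particular $B\subseteq\Omega_{0}\subseteq\Omega$ and $\Omega=\bigcup_{n}F^{-n}(\Omega_{0})$.

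For well-definedness, if $(x,y)\in F^{-n}(B)\subseteq F^{-m}(B)$ with $m\ge n$, then $F^{n}(x,y),\dots,F^{m}(x,y)\in B\subseteq\Omega_{0}$, so iterating $\tau\circ F=\overline{\lambda}\tau$ yields $\tau(F^{m}(x,y))=\overline{\lambda}^{\,m-n}\tau(F^{n}(x,y))$, whence $\lambda^{m}\tau(F^{m}(x,y))=\lambda^{n}\tau(F^{n}(x,y))$ since $|\lambda|=1$. On each $F^{-n}(B)$ the map $\Phi$ is a composition of the holomorphic maps $\psi$, $\tau$ and $F^{n}$, so $\Phi$ is holomorphic on $\Omega$. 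For $(x,y)\in\Omega_{0}$, forward invariance of $\Omega_{0}$ lets me iterate $\tau\circ F=\overline{\lambda}\tau$ along the whole orbit, so $\lambda^{n}\tau(F^{n}(x,y))=\tau(x,y)$ and $\Phi|_{\Omega_{0}}=(\psi,\tau)$. Finally, picking $n\ge1$ with $(x,y)\in F^{-n}(B)$, one has $F(x,y)\in F^{-(n-1)}(B)$, hence $\Phi(F(x,y))=(\psi(x,y)+1,\lambda^{n-1}\tau(F^{n}(x,y)))$, which is $(z+1,\overline{\lambda}w)$ at $(z,w)=\Phi(x,y)$; thus $\Phi$ realizes the asserted conjugacy.

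It remains to see $\Phi$ is a bijection onto $\mathbb{C}\times\mathbb{C}^{*}$. I would first note that $(\psi,\tau)$ is already a biholomorphism of $\Omega_{0}$ onto $\psi(B)\times\mathbb{C}^{*}$: this holds for $(\psi,\sigma)$ by Lemma~\ref{lem:FatouCoordBRS}, and the identity $\tau=e^{h(\psi)/2}\sqrt{\psi}\,\sigma$ realizes $(\psi,\tau)$ as $(\psi,\sigma)$ followed by the biholomorphism $(p,s)\mapsto(p,e^{h(p)/2}\sqrt{p}\,s)$ of $\psi(B)\times\mathbb{C}^{*}$, the multiplier being holomorphic and nowhere zero on $\psi(B)\subseteq H(R_{1},\theta_{1})$. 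Injectivity then follows: if $\Phi(x,y)=\Phi(x',y')$, pick $n$ with both points in $F^{-n}(B)$; equating coordinates and using $\psi\circ F^{n}=\psi+n$ gives $(\psi,\tau)(F^{n}(x,y))=(\psi,\tau)(F^{n}(x',y'))$, so $F^{n}(x,y)=F^{n}(x',y')$ and $(x,y)=(x',y')$. For surjectivity, $\Phi(\Omega_{0})=\psi(B)\times\mathbb{C}^{*}$, and by the conjugacy $\Phi\circ F^{-n}=G^{-n}\circ\Phi$ for $G(z,w)=(z+1,\overline{\lambda}w)$, so
\[
\Phi(\Omega)=\bigcup_{n}\Phi\paren{F^{-n}(\Omega_{0})}=\bigcup_{n}G^{-n}\paren{\psi(B)\times\mathbb{C}^{*}}=\paren[\bigg]{\bigcup_{n}\paren{\psi(B)-n}}\times\mathbb{C}^{*}.
\]
Since $\psi(B)\supseteq H(\tilde{R},\tilde{\theta})$ and, for any fixed $z_{0}\in\mathbb{C}$, one has $z_{0}+n\in H(\tilde{R},\tilde{\theta})$ for all large $n$ (because $\Re(z_{0}+n)\to+\infty$ and $\arg(z_{0}+n)\to0$), the union $\bigcup_{n}(\psi(B)-n)$ is all of $\mathbb{C}$, so $\Phi(\Omega)=\mathbb{C}\times\mathbb{C}^{*}$. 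A holomorphic bijection is a biholomorphism.

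I expect surjectivity to be the real obstacle: the key point is that, through the conjugacy, exhausting $\mathbb{C}\times\mathbb{C}^{*}$ amounts to translating the sector at infinity $\psi(B)$ leftward by the integers, which sweeps out all of $\mathbb{C}$ precisely because $\psi(B)$ contains a full sector $H(\tilde{R},\tilde{\theta})$ opening toward $+\infty$; one must also make sure $\Omega$ is genuinely exhausted by the backward iterates $F^{-n}(\Omega_{0})$ of the model domain, which uses the $F$-invariance of the Fatou component $\Omega$. Everything else is routine manipulation of the functional equations $\psi\circ F=\psi+1$ and $\tau\circ F=\overline{\lambda}\tau$, the one point needing care being the cancellation $\lambda^{m}\overline{\lambda}^{\,m-n}=\lambda^{n}$, valid exactly because $|\lambda|=1$.
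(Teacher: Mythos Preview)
Your proof is correct and follows essentially the same approach as the paper: well-definedness via the functional equation $\tau\circ F=\overline{\lambda}\tau$, injectivity by reducing to the injectivity of $(\psi,\tau)$ on $\Omega_{0}$, and surjectivity by translating the sector $\psi(B)\supseteq H(\tilde{R},\tilde{\theta})$ to cover $\mathbb{C}$. You supply more detail than the paper (holomorphicity, the explicit conjugacy check, and the factorisation $\tau=e^{h(\psi)/2}\sqrt{\psi}\,\sigma$ to deduce that $(\psi,\tau)$ is a biholomorphism onto $\psi(B)\times\mathbb{C}^{*}$ from the corresponding statement for $(\psi,\sigma)$), but the argument is the same.
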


\begin{proof}
$(\psi,\tau)$ is injective on $\Omega_{0}$ by Proposition~\ref{prop:AltExtraFatouCoord}.
Let $p\in\Omega$. Then there exists $n\in\mathbb{N}$ such that $F^{n}(p)\in B\subseteq\Omega_{0}$.
For $m<n$ such that $F^{m}(p)$ and $F^{n}(p)$ lie in $B$, we have
\[
\lambda^{n}\tau(F^{n}(p))=\lambda^{n}\tau(F^{n-m}(F^{m}(p)))=\lambda^{m}\tau(F^{m}(p)),
\]
so $\Phi$ is well-defined. $\Phi$ is moreover injective as for any
$p,q\in\Omega$ there exists $n\in\mathbb{N}$ such that $F^{n}(p)$
and $F^{n}(q)$ lie in $B\subseteq\Omega_{0}$ where $(\psi,\tau)$
is injective.

To show surjectivity take $(\zeta,\xi)\in\mathbb{C}\times\mathbb{C}^{*}$.
Then there exists $n\in\mathbb{N}$ such that $\zeta+n\in H(\tilde{R},\tilde{\theta})\subseteq\psi(B)$
and hence $(\zeta+n,\lambda^{-n}\xi)\in\psi(B)\times\mathbb{C}^{*}=\im(\psi,\tau)$,
i.e.\ there exists $p\in\Omega_{0}$ such that $(\psi,\sigma)(p)=(\zeta+n,\lambda^{-n}\xi)$
and hence $\Phi(F^{-n}(p))=(\zeta,\xi)$.
\end{proof}
The multiplicative term $\overline{\lambda}$ in the second component
can always be eliminated, since the biholomorphic map $(z,w)\mapsto(z,\lambda^{z}w)$
conjugates $(z,w)\mapsto(z+1,\overline{\lambda}w)$ to $(z,w)\mapsto(z+1,w)$,
yielding the following corollary:
\begin{cor}
\label{cor:EliminateRotTerm}There exists a biholomorphic map $\Psi:\Omega\to\mathbb{C}\times\mathbb{C}^{*}$
conjugating $F$ to $(z,w)\mapsto(z+1,w)$.
\end{cor}

The arguments in this section rely only on the internal dynamics on
$\Omega$ described by the coordinates in Lemma~\ref{lem:FatouCoordBRS},
that have been constructed in \cite{BracciRaissyZaitsev2013Dynamicsofmulti-resonantbiholomorphisms}
and \cite{BracciRaissyStensones.2017.AutomorphismsofmathbbC2withaninvariantnon-recurrentattractingFatoucomponentbiholomorphictomathbbCtimesmathbbC}
for any automorphism of the form (\ref{eq:BZgerms}). Hence we have
moreover shown:
\begin{prop}
\label{prop:BRSniceCoords}Let $\check{F}$ and $\check{\Omega}$
be as in Theorem~\ref{thm:BRS}. Then there exists a biholomorphic
map $\check{\Psi}:\check{\Omega}\to\mathbb{C}\times\mathbb{C}^{*}$
conjugating $\check{F}$ to $(z,w)\mapsto(z+1,w)$.
\end{prop}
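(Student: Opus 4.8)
The plan is to observe that Proposition~\ref{prop:BRSniceCoords} is essentially already proved in the body of Section~\ref{sec:Cylinder-coordinates}, and that the only work remaining is to check that none of the constructions in that section used the blow-up picture or the fact that $\check{F}$ fixes an axis. Concretely, the chain of results Lemma~\ref{lem:FatouCoordBRS} $\to$ Proposition~\ref{prop:AltExtraFatouCoord} $\to$ Proposition~\ref{prop:FatouCoordGlobal} $\to$ Corollary~\ref{cor:EliminateRotTerm} produces, for $F$ acting on $\Omega$, a biholomorphism $\Psi:\Omega\to\mathbb{C}\times\mathbb{C}^{*}$ conjugating $F$ to $(z,w)\mapsto(z+1,w)$. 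I would point out that each input to this chain is available directly downstairs: Lemma~\ref{lem:FatouCoordBRS} is quoted from \cite{BracciRaissyStensones.2017.AutomorphismsofmathbbC2withaninvariantnon-recurrentattractingFatoucomponentbiholomorphictomathbbCtimesmathbbC} and \cite{BracciRaissyZaitsev2013Dynamicsofmulti-resonantbiholomorphisms} \emph{for the original automorphism $\check{F}$ on $\check{\Omega}$}, with $\check{\Omega}$ in place of $\Omega$, $\check{B}$ in place of $B$, and the natural coordinate $zw$ in place of $xy^{2}$ playing the role of $U$ after the substitution $U=1/(zw)$.

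So the first step is to restate Lemma~\ref{lem:FatouCoordBRS} in the downstairs coordinates: there is $\check\psi:\check\Omega\to\mathbb{C}$ with $\check\psi\circ\check F=\check\psi+1$ and asymptotics $\check\psi=U+c\log U+O(U^{-1})$ as $(z,w)\to\{0\}\times\{0\}$, where now $U=1/(zw)$, together with $\check\sigma:\check\Omega_{0}:=\check\psi^{-1}(\check\psi(\check B))\to\mathbb{C}^{*}$ with $\check\sigma\circ\check F=\overline{\lambda}e^{-1/(2\check\psi)}\check\sigma$ and $(\check\psi,\check\sigma):\check\Omega_{0}\to\check\psi(\check B)\times\mathbb{C}^{*}$ biholomorphic onto a product over a sector at infinity. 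The second step is to run the proofs of Proposition~\ref{prop:AltExtraFatouCoord} and Proposition~\ref{prop:FatouCoordGlobal} verbatim with checks ($\check\psi,\check\sigma,\check\tau,\check F,\check\Omega$) substituted for the un-checked symbols: the only facts used there are the functional equations for $\check\psi$ and $\check\sigma$, the asymptotics, the existence of a sector at infinity inside $\check\psi(\check B)$, and Lemma~\ref{lem:HarmonicVsLog}, none of which involve the blow-up. This yields a biholomorphism $(\check\psi,\check\tau):\check\Omega_{0}\to\check\psi(\check B)\times\mathbb{C}^{*}$ extending via $\check\Phi(p)=(\check\psi(p),\lambda^{n}\check\tau(\check F^{n}(p)))$ for $p\in\check F^{-n}(\check B)$ to a biholomorphism $\check\Omega\to\mathbb{C}\times\mathbb{C}^{*}$ conjugating $\check F$ to $(z,w)\mapsto(z+1,\overline\lambda w)$. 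The final step is to apply the same clearing transformation $(z,w)\mapsto(z,\lambda^{z}w)$ as in Corollary~\ref{cor:EliminateRotTerm} to remove the $\overline\lambda$, producing $\check\Psi:\check\Omega\to\mathbb{C}\times\mathbb{C}^{*}$ conjugating $\check F$ to $(z,w)\mapsto(z+1,w)$.

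I do not expect a genuine obstacle here; the only point demanding a little care is making sure that Lemma~\ref{lem:FatouCoordBRS} really is stated in \cite{BracciRaissyStensones.2017.AutomorphismsofmathbbC2withaninvariantnon-recurrentattractingFatoucomponentbiholomorphictomathbbCtimesmathbbC,BracciRaissyZaitsev2013Dynamicsofmulti-resonantbiholomorphisms} for the ambient domain $\check\Omega$ of a general automorphism of the form (\ref{eq:BZgerms}) — which the author already asserts in the sentence introducing the proposition — rather than only for the special axis-fixing representative used to build $F$. Granting that, the proof is a one-paragraph remark: the arguments of Propositions~\ref{prop:AltExtraFatouCoord} and~\ref{prop:FatouCoordGlobal} and of Corollary~\ref{cor:EliminateRotTerm} are purely internal to the dynamical data $(\check\psi,\check\sigma)$ on $\check\Omega$ and carry over word for word. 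Accordingly, I would write the proof simply as: the construction of $\psi,\sigma,\tau,\Phi,\Psi$ in this section used only Lemma~\ref{lem:FatouCoordBRS}, which holds equally for $\check F$ on $\check\Omega$ by \cite{BracciRaissyStensones.2017.AutomorphismsofmathbbC2withaninvariantnon-recurrentattractingFatoucomponentbiholomorphictomathbbCtimesmathbbC,BracciRaissyZaitsev2013Dynamicsofmulti-resonantbiholomorphisms}; repeating the arguments of Proposition~\ref{prop:AltExtraFatouCoord} through Corollary~\ref{cor:EliminateRotTerm} in those coordinates yields $\check\Psi$.
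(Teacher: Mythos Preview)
Your proposal is correct and matches the paper's own justification: the proposition is stated immediately after the remark that the arguments of Section~\ref{sec:Cylinder-coordinates} rely only on the coordinates of Lemma~\ref{lem:FatouCoordBRS}, which were constructed in \cite{BracciRaissyZaitsev2013Dynamicsofmulti-resonantbiholomorphisms,BracciRaissyStensones.2017.AutomorphismsofmathbbC2withaninvariantnon-recurrentattractingFatoucomponentbiholomorphictomathbbCtimesmathbbC} for any automorphism of the form~(\ref{eq:BZgerms}). Your spelled-out version --- restating Lemma~\ref{lem:FatouCoordBRS} downstairs and rerunning Propositions~\ref{prop:AltExtraFatouCoord}--\ref{prop:FatouCoordGlobal} and Corollary~\ref{cor:EliminateRotTerm} with checked symbols --- is exactly what that sentence abbreviates.
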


\section{\label{sec:Limit-set}Limit sets}

We use the coordinates from the previous section to identify the limit
sets of orbits in $\Omega$ and the images of limit functions, concluding
the proof of Theorem~\ref{thm:PuncturedSiegelCyl}.
\begin{lem}
\label{lem:LimitSetsofPts}For $(x,y)\in\Omega$, we have $\omega_{F}(x,y)=\tau(x,y)^{2}S^{1}\times\{0\}$.
\end{lem}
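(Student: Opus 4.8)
The strategy is to transport the question into the $(\psi,\tau)$-coordinates, where the dynamics becomes completely explicit, and then read off the $\omega$-limit set directly. Recall from Proposition~\ref{prop:FatouCoordGlobal} that $\Phi=(\psi,\lambda^{n}\tau\circ F^{n})$ conjugates $F$ on $\Omega$ to $(z,w)\mapsto(z+1,\overline{\lambda}w)$ on $\mathbb{C}\times\mathbb{C}^{*}$. Fix $(x,y)\in\Omega$ and write $(\zeta,\xi)=\Phi(x,y)$; then $\Phi(F^{n}(x,y))=(\zeta+n,\overline{\lambda}^{\,n}\xi)$. The first task is to describe the orbit $(x_{n},y_{n})=F^{n}(x,y)$ back in the original coordinates. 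Since eventually $F^{n}(x,y)\in B\subseteq\Omega_{0}$, for large $n$ we may invert $(\psi,\tau)$ on the fibre over $\zeta+n\in\psi(B)$: the orbit point is the unique preimage of $(\zeta+n,\overline{\lambda}^{\,n}\xi)$. The key asymptotic input is Remark~\ref{rem:tauAsympEquiv}, namely $x_{n}\sim\tau(x_{n},y_{n})^{2}=(\overline{\lambda}^{\,n}\xi)^{2}$ together with $\tau(x,y)=\sqrt{\psi(x,y)}\,\sigma(x,y)+\sigma(x,y)O(\psi(x,y)^{-1/2})$ from Proposition~\ref{prop:AltExtraFatouCoord}, and the size controls $|x_{n}|\approx1$, $|y_{n}|\approx n^{-1/2}$ from Lemma~\ref{lem:CharOmegaNew}.

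Next I would combine these. Any limit function $F_{\infty}=\lim_{k}F^{n_{k}}$ on $\Omega$ has image contained in the exceptional divisor $E$, and by Lemma~\ref{lem:CharOmegaNew}(\ref{enu:xnlikeconstant}) no orbit accumulates at $P_{\infty}$, so $\omega_{F}(x,y)\subseteq\mathcal{C}^{*}=\mathbb{C}^{*}\times\{0\}$; thus accumulation points have the form $(a,0)$ with $a\neq 0$, and $y_{n}\to 0$ forces $w_{n}=y_{n}\to 0$ while $x_{n}\to a$. So it suffices to determine the accumulation set of $\{x_{n}\}_{n}$ in $\mathbb{C}^{*}$. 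From $x_{n}\sim(\overline{\lambda}^{\,n}\xi)^{2}=\overline{\lambda}^{\,2n}\xi^{2}$ and $|\overline{\lambda}^{\,2n}\xi^{2}|=|\xi|^{2}$ constant, the accumulation set of $\{x_{n}\}$ coincides with the accumulation set of $\{\overline{\lambda}^{\,2n}\xi^{2}\}_{n}$. Since $\lambda\in S^{1}$ is a Brjuno number it is in particular irrational (not a root of unity), so $\{\lambda^{2n}\}_{n}$ — equivalently $\{\overline{\lambda}^{\,2n}\}_{n}$ — is dense in $S^{1}$ by Weyl's equidistribution/the classical density of an irrational rotation orbit. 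Hence $\{\overline{\lambda}^{\,2n}\xi^{2}\}_{n}$ is dense in the circle $|\xi|^{2}S^{1}=\xi^{2}S^{1}$, and its closure is exactly that circle. Therefore $\omega_{F}(x,y)=\xi^{2}S^{1}\times\{0\}$. Finally, identify $\xi^{2}$ with $\tau(x,y)^{2}$: since $\xi=\Phi(x,y)_{2}$ agrees with $\tau(x,y)$ when $(x,y)\in B$, and more generally $\xi^{2}S^{1}=\tau(x,y)^{2}S^{1}$ because the two differ only by a unimodular factor $\lambda^{2n}$ (from the defining relation $\lambda^{n}\tau\circ F^{n}$), we get $\omega_{F}(x,y)=\tau(x,y)^{2}S^{1}\times\{0\}$ as claimed.

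The one genuinely delicate point is the passage from the asymptotic equivalence $x_{n}\sim\overline{\lambda}^{\,2n}\xi^{2}$ to the equality of accumulation sets: $\sim$ only controls the modulus ratio $|x_{n}|/|\overline{\lambda}^{\,2n}\xi^{2}|\to 1$, not the argument, so I must be slightly more careful. The clean fix is to go back through $\tau$: write $x_{n}=\tau(x_{n},y_{n})^{2}\bigl(1+o(1)\bigr)$ where now the $o(1)$ is a genuine complex null sequence — this follows from $x_{n}=(\sqrt{U_{n}}y_{n})^{2}$ and $\tau(x_{n},y_{n})=\sqrt{\psi(x_{n},y_{n})}\sigma(x_{n},y_{n})(1+O(\psi^{-1}))$ with $\psi(x_{n},y_{n})\sim U_{n}$, $\sigma(x_{n},y_{n})\sim y_{n}$ from Remark~\ref{rem:FatouCoordLimitEquiv}, so the correction is multiplicative and $\to 1$ in $\mathbb{C}$, not merely in modulus. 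Since $\tau(x_{n},y_{n})^{2}=\bigl(\lambda^{-n}\tau(x,y)\bigr)^{2}=\overline{\lambda}^{\,2n}\tau(x,y)^{2}$ exactly (by the functional equation $\tau\circ F=\overline{\lambda}\tau$ once we are in $\Omega_{0}$, extended via the cocycle relation), we get $x_{n}=\overline{\lambda}^{\,2n}\tau(x,y)^{2}(1+o(1))$ with complex $o(1)$, and now density of $\{\overline{\lambda}^{\,2n}\}$ in $S^{1}$ gives the accumulation set $\tau(x,y)^{2}S^{1}$ on the nose. Pairing with $y_{n}\to 0$ completes the identification; the reverse inclusion (every point of $\tau(x,y)^{2}S^{1}\times\{0\}$ is actually attained as a limit) is immediate from the same density statement by extracting, for each target direction, a subsequence $n_{k}$ with $\overline{\lambda}^{\,2n_{k}}\to$ the desired phase.
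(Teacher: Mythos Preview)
Your proof is correct and follows the same line as the paper's: $y_{n}\to 0$ from Lemma~\ref{lem:CharOmegaNew}, $x_{n}\sim\tau(x_{n},y_{n})^{2}=\overline{\lambda}^{\,2n}\tau(x,y)^{2}$ from Remark~\ref{rem:tauAsympEquiv} together with the functional equation $\tau\circ F=\overline{\lambda}\tau$, and then density of $\{\overline{\lambda}^{\,2n}\}_{n}$ in $S^{1}$. Your third paragraph is a legitimate refinement rather than a different approach: since the paper's convention for $\sim$ concerns only moduli, the passage from $x_{n}\sim\overline{\lambda}^{\,2n}\tau(x,y)^{2}$ to the identification of the accumulation set tacitly uses that the ratio tends to $1$ in $\mathbb{C}$, and you are right to trace this back through the explicit expansions of $\psi$, $\sigma$, and $\tau$.
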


\begin{proof}
By Lemma~\ref{lem:CharOmegaNew}, we have $y_{n}\to0$ and by Remark~\ref{rem:tauAsympEquiv},
we have
\[
x_{n}\sim\tau(x_{n},y_{n})^{2}=\lambda^{2n}\tau(x,y)^{2}.
\]
Since $\lambda$ is an irrational rotation, $x_{n}$ accumulates on
all of $\tau(x,y)^{2}S^{1}$.
\end{proof}
\begin{cor}
\label{cor:LimitSetIsCstar}$\omega_{F}(B)=\mathbb{C}^{*}\times\{0\}$
and any limit function $F_{\infty}:\Omega\to\mathbb{C}^{*}\times\{0\}$
of a convergent subsequence of $\{F^{n}\}_{n}$ is surjective. Postcomposition
of $F_{\infty}$ with a rotation of $\mathbb{C}^{*}\times\{0\}$ yields
precisely all possible such limit functions.
\end{cor}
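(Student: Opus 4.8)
The plan is to deduce Corollary~\ref{cor:LimitSetIsCstar} directly from Lemma~\ref{lem:LimitSetsofPts} together with the coordinate description of $\Omega$ from Section~\ref{sec:Cylinder-coordinates}. The key point is that by Lemma~\ref{lem:LimitSetsofPts} the $\omega$-limit set of a single point $(x,y)\in\Omega$ is the circle $\tau(x,y)^{2}S^{1}\times\{0\}$, so as $(x,y)$ ranges over $\Omega$ the union of these circles is $\{\tau(x,y)^{2}\mid(x,y)\in\Omega\}\cdot S^{1}\times\{0\}$. Since $\tau$ is bijective on each fibre $\psi^{-1}(p)$ with values in $\mathbb{C}^{*}$ (Proposition~\ref{prop:AltExtraFatouCoord}, extended globally via Proposition~\ref{prop:FatouCoordGlobal}), the image of $\tau$ on $\Omega$ is all of $\mathbb{C}^{*}$, hence $\{\tau(x,y)^{2}\}=\mathbb{C}^{*}$ as well, and the union of the circles is $\mathbb{C}^{*}\times\{0\}$. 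Restricting attention to $(x,y)\in B$: since $\psi(B)$ contains a sector at infinity and $\tau$ is surjective onto $\mathbb{C}^{*}$ on each fibre over $\psi(B)$, the same computation gives $\omega_{F}(B)=\mathbb{C}^{*}\times\{0\}$.

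For the statement about limit functions, first I would note that any convergent subsequence $F^{n_{k}}\to F_{\infty}$ has image in $E'=\mathbb{C}\times\{0\}$ (this was already used in Section~\ref{sec:The-Fatou-component}: $y_{n}\to0$ locally uniformly by Lemma~\ref{lem:CharOmegaNew}, and $F_{\infty}$ cannot hit the point $P_{\infty}$, so $F_{\infty}(\Omega)\subseteq\mathcal{C}^{*}=\mathbb{C}^{*}\times\{0\}$ once we know $x_{n}$ stays bounded away from $0$). Writing $F^{n_{k}}(x,y)=(x_{n_{k}},y_{n_{k}})$ and using Remark~\ref{rem:tauAsympEquiv}, namely $x_{n}\sim\tau(x_{n},y_{n})^{2}=\lambda^{2n}\tau(x,y)^{2}$, the first coordinate of $F_{\infty}(x,y)$ is $\lim_{k}\lambda^{2n_{k}}\tau(x,y)^{2}$ whenever this limit exists; passing to a further subsequence we may assume $\lambda^{2n_{k}}\to\mu$ for some $\mu\in S^{1}$, and then $F_{\infty}(x,y)=(\mu\,\tau(x,y)^{2},0)$. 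Since $\tau$ is surjective onto $\mathbb{C}^{*}$, this $F_{\infty}$ is surjective onto $\mathbb{C}^{*}\times\{0\}$, and any two such limit functions differ precisely by the rotation factor $\mu\in S^{1}$, i.e.\ by postcomposition with a rotation of $\mathbb{C}^{*}\times\{0\}$.

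Conversely, I would check that every rotation is realised: given $\mu\in S^{1}$, irrationality of $\lambda$ makes $\{\lambda^{2n}\}_{n}$ dense in $S^{1}$, so there is a subsequence with $\lambda^{2n_{k}}\to\mu$; by the diagonal/Montel argument (the family $\{F^{n}\}$ is normal on $\Omega$) we may assume $F^{n_{k}}$ converges, and its limit is then $(x,y)\mapsto(\mu\,\tau(x,y)^{2},0)$, i.e.\ $\mu$ times the fixed limit function $F_{\infty}$. Thus postcompositions of $F_{\infty}$ with rotations of $\mathbb{C}^{*}\times\{0\}$ give precisely all limit functions.

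The only mildly delicate point is the interchange of limits hidden in ``$x_{n}\sim\tau(x_{n},y_{n})^{2}$'': Remark~\ref{rem:tauAsympEquiv} gives this asymptotic equivalence, but to conclude $F_{\infty}(x,y)=\mu\tau(x,y)^{2}$ one needs that along the subsequence the ratio $x_{n_{k}}/\tau(x_{n_{k}},y_{n_{k}})^{2}\to1$ locally uniformly, which follows from the locally uniform estimates in Lemma~\ref{lem:CharOmegaNew} and Remark~\ref{rem:FatouCoordLimitEquiv} (the error terms there are $O(U^{-1/2})$ with the relevant bounds uniform on compacts); I expect this bookkeeping to be the main, though routine, obstacle.
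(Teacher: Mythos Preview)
Your proof is correct and in fact more complete than the paper's. For $\omega_{F}(B)=\mathbb{C}^{*}\times\{0\}$ both arguments coincide: surjectivity of $\tau$ together with Lemma~\ref{lem:LimitSetsofPts}. The approaches diverge on the surjectivity of a limit function $F_{\infty}$. The paper simply notes that $F_{\infty}$ is non-constant (by Lemma~\ref{lem:LimitSetsofPts}) and then invokes Picard's theorem to conclude that a non-constant holomorphic map $\Omega\cong\mathbb{C}\times\mathbb{C}^{*}\to\mathbb{C}^{*}$ is onto. You instead derive the explicit form $F_{\infty}(x,y)=(\mu\,\tau(x,y)^{2},0)$ for some $\mu\in S^{1}$ from the asymptotic in Remark~\ref{rem:tauAsympEquiv}, and read surjectivity off from the surjectivity of $\tau$. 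Your route is more elementary (no Picard) and, more to the point, yields the full description of the limit functions, which you then use to prove the third assertion of the corollary---that they are precisely the rotations of a fixed one. The paper's two-sentence proof does not spell out this third part at all, so your argument supplies what the paper leaves implicit. The ``mildly delicate'' locally-uniform convergence you flag is indeed routine from the stated uniform bounds.
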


\begin{proof}
The map $\tau:\Omega_{0}\to\mathbb{C}^{*}$ is surjective, so $\omega(B)=\mathbb{C}^{*}\times\{0\}$.
Every limit function $F_{\infty}$ is not constant by Lemma~\ref{lem:LimitSetsofPts}
and by Picard's theorem satisfies $F_{\infty}(\Omega)=\mathbb{C}^{*}\times\{0\}$.
\end{proof}
This concludes the proof of Theorem~\ref{thm:PuncturedSiegelCyl}.

\end{document}